\newtheorem{theorem}{Theorem}[section]
\newtheorem{proposition}[theorem]{Proposition}
\newtheorem{definition}[theorem]{Definition}
\newtheorem{corollary}[theorem]{Corollary}
\newtheorem{remark}[theorem]{Remark}
\newcommand{\be}{\begin{equation}}
\newcommand{\ee}{\end{equation}}
\newcommand{\bes}{\begin{equation*}}
\newcommand{\ees}{\end{equation*}}
\newcommand{\cD}{\mathcal{D}}
\newcommand{\mb}[1]{\mathbb{#1}}
\begin{document}

\title[Dilation theory in finite dimensions]{Dilation theory in finite dimensions: the possible, the impossible and the unknown}

\author{Eliahu Levy}
\address{Dept. of Mathematics\\
Technion IIT\\
Haifa 32000, Israel}
\email{eliahu@tx.technion.ac.il}
\author{Orr Moshe Shalit}
\address{Pure Mathematics Dept.\\
University of Waterloo\\
Waterloo, ON\; N2L--3G1\\
Canada}
\email{oshalit@uwaterloo.ca}
\date{}
\subjclass[2010]{47A20 (Primary), 15A45, 47A57 (Secondary)}
\maketitle

\begin{abstract}
This expository essay discusses a finite dimensional approach to dilation theory. How much of dilation theory can be worked out within the realm of linear algebra? It turns out that some interesting and simple results can be obtained. These results can be used to give very elementary proofs of sharpened versions of some von Neumann type inequalities, as well as some other striking consequences about polynomials and matrices.  Exploring the limits of the finite dimensional approach sheds light on the difference between those techniques and phenomena in operator theory that are inherently infinite dimensional, and those that are not.
\end{abstract}

\section{A single contraction}

In these notes $H$ is a finite dimensional real or complex Hilbert space of dimension $\dim H = n$. A contraction is an operator $T$ with  $\|T\|\leq 1$.

Von Neumann's Inequality states that for every contraction $T$ on a Hilbert space  and every polynomial $p$,
\be\label{eq:vN}
\|p(T)\| \leq \|p\|_\infty :=  \sup_{|z|=1} |p(z)|.
\ee
There is an elegant proof of this inequality using Sz.-Nagy's Dilation Theorem:
\begin{theorem}\label{thm:NagyDT}{\bf Sz.-Nagy's Dilation Theorem \cite[Thm. 4.2]{SzNF70}.}
Let $T$ be a contraction acting on a Hilbert space $H$. Then there exists a Hilbert space $K \supseteq H$ and a unitary $U$ on $K$ such that
\be\label{eq:dilation}
T^k = P_H U^k P_H \,\, , \,\, k \in \mb{N}.
\ee
\end{theorem}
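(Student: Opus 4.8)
The plan is to build the unitary $U$ in two stages: first dilate the contraction $T$ to an \emph{isometry}, and then extend that isometry to a unitary. The only algebraic input needed is the \emph{defect operator} $D_T = (I - T^*T)^{1/2}$, which is a well-defined positive operator precisely because $\|T\|\leq 1$ forces $T^*T \leq I$; its defining property is the identity $\|Th\|^2 + \|D_T h\|^2 = \|h\|^2$ for every $h \in H$.

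First I would set $K_+ = H \oplus H \oplus H \oplus \cdots$, with $H$ embedded as the zeroth summand, and define the Schäffer-type isometry
\be
V(h_0, h_1, h_2, \ldots) = (Th_0,\, D_T h_0,\, h_1,\, h_2, \ldots).
\ee
A one-line computation using the defect identity shows $V$ is an isometry. The crucial structural feature is that $V$ is ``lower triangular'': it feeds the $H$-component only into the next coordinate and updates the $H$-component only through $T$. Consequently an easy induction gives $P_H V^k|_H = T^k$ for every $k$, since the zeroth coordinate of $V^k(h,0,0,\ldots)$ is exactly $T^k h$. This already captures the power formula at the level of the isometric dilation.

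Next I would promote $V$ to a unitary. Writing $L = K_+ \ominus V K_+$ for the defect space of $V$, I set $\tilde K = K_+ \oplus (L \oplus L \oplus \cdots)$ and define
\be
U(k, \ell_0, \ell_1, \ell_2, \ldots) = (Vk + \ell_0,\, \ell_1,\, \ell_2, \ldots).
\ee
Because $Vk \perp \ell_0$ (the first lands in $V K_+$, the second in its orthocomplement $L$), the map $U$ is isometric, and it is onto since every $k' \in K_+$ decomposes uniquely as $Vk + \ell_0$; hence $U$ is unitary. By construction $U$ restricts to $V$ on $K_+$ and, crucially, maps $K_+$ into itself, so that $U^k|_{K_+} = V^k$ for all $k \geq 0$.

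Combining the two stages with $H \subseteq K_+ \subseteq \tilde K$ finishes the argument: for $h \in H$ one gets $P_H U^k P_H h = P_H U^k h = P_H V^k h = T^k h$. The step I expect to require the most care is the second stage, not because the construction is intricate, but because one must check that passing from the isometry to its unitary extension does not disturb the compression to $H$ --- that is, that $U$ still sends $K_+$ forward into $K_+$, so the powers $U^k$ see only the isometric dynamics feeding $T^k$ back into $H$. This is exactly the point that distinguishes a genuine power dilation from a mere unitary completion of the $2\times 2$ block matrix, which reproduces $T$ but not its higher powers. It is also worth noting that even when $\dim H < \infty$, the space $\tilde K$ is typically infinite dimensional as soon as $D_T \neq 0$, foreshadowing the finite dimensional obstructions the essay goes on to explore.
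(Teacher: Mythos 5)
Your proof is correct, but it is worth pointing out that the paper itself offers no proof of Theorem \ref{thm:NagyDT}: the theorem is quoted from \cite[Thm. 4.2]{SzNF70}, precisely because its proof is inherently infinite dimensional and the essay's goal is to explore how much can be done without that. Your argument is the classical two-stage construction: first the Sch\"affer-type isometric dilation $V$ on $K_+ = H\oplus H\oplus\cdots$, then the unitary extension of that isometry obtained by adjoining shifted copies of the defect space $L = K_+\ominus VK_+$. Every step checks out: $V$ is isometric by the defect identity $\|Th\|^2+\|D_Th\|^2=\|h\|^2$; induction gives that the zeroth coordinate of $V^k(h,0,0,\dots)$ is $T^kh$; your $U$ is unitary (its adjoint is $(k',m_0,m_1,\dots)\mapsto(V^*k',P_Lk',m_0,m_1,\dots)$, and $VV^*+P_L=I_{K_+}$ yields surjectivity); and, the point you correctly flag as the crux, $U$ leaves $K_+$ invariant, so $U^k|_{K_+}=V^k$ and hence $P_HU^kP_H=T^k$ for all $k$. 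That forward invariance is exactly what the Halmos completion (\ref{eq:Halmos}) lacks, which is why, as the paper notes, it is only a $1$-dilation. Compared with the paper's own constructive content, the difference is one of philosophy: Theorem \ref{thm:Ndil} produces, for each $N$, a unitary $N$-dilation on the finite-dimensional space $H\oplus \mathcal{D}_{T^*}\oplus\cdots\oplus\mathcal{D}_{T^*}$ ($N$ summands), and the remark following it observes that the inductive (equivalently, strong operator) limit of these recovers Sz.-Nagy's minimal isometric dilation; your construction instead jumps directly to an infinite-dimensional space and captures all powers at once, which is what the full theorem demands but what no finite-dimensional space can deliver once $T$ fails to be unitary.
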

Here and below, $P_H$ denotes the projection of $K$ onto $H$.

The elegant proof alluded to runs as follows (see \cite[Sec. 8]{SzNF70}): by the Spectral Theorem, von Neumann's Inequality holds for any unitary. By virtue of (\ref{eq:dilation}), $p(T) = P_H p(U) P_H$ for any polynomial. Thus 
\bes
\|p(T)\| \leq \|p(U)\| \leq \|p\|_\infty.
\ees

When $T$ is not a unitary, then the space $K$ in the above theorem is necessarily infinite dimensional. Note that (\ref{eq:vN}) is not a trivial fact even when $H$ is finite dimensional. A motivation for writing these notes was to find a proof of (\ref{eq:vN}) and its generalizations for finite dimensional spaces that does not involve infinite dimensional Hilbert space, but which does involve dilation theory. There do exist proofs of this inequality for when $T$ is a matrix (see Chapter 1 in \cite{Pisier}, or Exercise 2.16 in \cite{Paulsen}), but we believe that the one presented below is the most elementary. It is probably not new, but in our opinion it should be recorded.

It was mentioned above that $K$ in Theorem \ref{thm:NagyDT} is always infinite dimensional when $T$ is not unitary. On the other hand, Halmos noticed \cite{Halmos50} that one can dilate $T$ to a unitary which acts on $H \oplus H$, given by
\be\label{eq:Halmos}
U = \begin{pmatrix}
T & (I - T T^*)^{1/2}\\
(I - T^* T)^{1/2} & -T^*
\end{pmatrix} .
\ee
However, this $U$ satisfies (\ref{eq:dilation}) only for $k=1$. This motivates the following definition.
\begin{definition}
Let $T$ be an operator on $H$, and let $N \in \mb{N}$. A \emph{unitary $N$-dilation for $T$} is a unitary $U$ acting on $K$ such that (\ref{eq:dilation}) holds for all $k = 1, \ldots, N$.
\end{definition}
Remarkably, even $1$-dilations do have applications \cite{Halmos50}; see \cite{ChoiLi} for a relatively recent one.

We make some standard definitions. 
As usual, a dilation will be called \emph{minimal} if the smallest subspace $L \subseteq K$ such that $H\subseteq L$ and $UL = L$, is $K$ itself. It will be called \emph{$N$-minimal} if $K = \textrm{span}\{U^k h : h \in H, k = 0, \ldots, N\}$. Two $N$-dilations $U_1,K_1$ and $U_2,K_2$ will be called \emph{isomorphic} if there is a unitary $W: K_1 \rightarrow K_2$ that fixes $H$ and intertwines $U_1$ and $U_2$.

Henceforth $H$ is a finite dimensional Hilbert space, $\dim H = n$. For a contraction $T$ on $H$ we  define $D_T = (I-T^*T)^{1/2}$, $\cD_T = \textrm{Im}(D_T)$, and $d_T = \dim \cD_T$. Note that $\dim \cD_{T^*} = d_T$.

\begin{theorem}\label{thm:Ndil}
For every $N$, every contraction $T$ on $H$ has an $N$-minimal, unitary $N$-dilation acting on a space of dimension $n + Nd_T$. This dilation is also minimal.
\end{theorem}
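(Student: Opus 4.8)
The plan is to build the dilation explicitly as a finite, ``wrapped-around'' version of the Schäffer isometric dilation. First I would set $K = H \oplus \cD_T \oplus \cdots \oplus \cD_T$ with exactly $N$ copies of $\cD_T$, labelling them $\cD_T^{(1)}, \ldots, \cD_T^{(N)}$; this fixes $\dim K = n + N d_T$ from the outset. I would then define the candidate unitary $U$ block by block: on $H$ put $Uh = (Th, D_T h, 0, \ldots, 0)$; on $\cD_T^{(j)}$ with $1 \le j \le N-1$ let $U$ be the forward shift onto $\cD_T^{(j+1)}$; and on the last copy $\cD_T^{(N)}$ let $U$ be a unitary onto a subspace $W \subseteq H \oplus \cD_T^{(1)}$ still to be pinned down.

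The next step is to check that this prescription really does define a unitary. The column $\binom{T}{D_T} \colon H \to H \oplus \cD_T^{(1)}$ is isometric, since $\norm{Th}^2 + \norm{D_T h}^2 = \langle (T^*T + (I - T^*T))h, h \rangle = \norm{h}^2$. Its range $R_0$ is $n$-dimensional inside the $(n + d_T)$-dimensional space $H \oplus \cD_T^{(1)}$, so the orthogonal complement $W := (H \oplus \cD_T^{(1)}) \ominus R_0$ has dimension exactly $d_T = \dim \cD_T^{(N)}$; hence a unitary $\cD_T^{(N)} \to W$ exists, and I would take $U$ to be any such map on the last copy. With this choice $U$ carries the mutually orthogonal summands $H, \cD_T^{(1)}, \ldots, \cD_T^{(N)}$ isometrically onto the mutually orthogonal subspaces $R_0, \cD_T^{(2)}, \ldots, \cD_T^{(N)}, W$, whose orthogonal sum is all of $K$. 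Thus $U$ is unitary, with no appeal to Halmos's explicit entries in (\ref{eq:Halmos}) --- a bare dimension count does the work.

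The dilation property then follows from a short induction. I expect to show $U^k h = (T^k h, D_T T^{k-1} h, \ldots, D_T h, 0, \ldots, 0)$ for $0 \le k \le N$ and $h \in H$; the point is that the component fed into $\cD_T^{(N)}$ first appears at step $k = N$ and is only sent back into $H \oplus \cD_T^{(1)}$ at step $N+1$, so the feedback is invisible to the first $N$ powers. Reading off the $H$-component gives $P_H U^k P_H = T^k$ for $k = 1, \ldots, N$. For $N$-minimality, I would argue inductively: $H = U^0 H$ lies in the span, and subtracting the already-captured components of $U^k H$ exhibits each $\cD_T^{(j)}$ in turn, so $\mathrm{span}\{U^k h : h \in H, k = 0, \ldots, N\} = K$. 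Minimality is then automatic, since any subspace $L$ with $H \subseteq L$ and $UL = L$ must contain every $U^k H$, in particular those with $0 \le k \le N$, whose span is already $K$.

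The main obstacle --- really the only substantive point --- is the observation in the second paragraph that the feedback from $\cD_T^{(N)}$ need only restore unitarity and plays no role in the powers $U^1, \ldots, U^N$. This is exactly what lets me route the last copy into the $d_T$-dimensional complement $W$ rather than adding further copies, and the timing of that feedback is what makes the dimension $n + N d_T$ sufficient.
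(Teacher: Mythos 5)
Your construction is correct, and in skeleton it coincides with the paper's: $K$ is $H$ plus $N$ copies of a $d_T$-dimensional space, $H$ is fed into the first copy by the isometric column $\binom{T}{D_T}$, the copies are shifted forward, and the last copy wraps around in a way that is invisible to the first $N$ powers of $U$. Where you genuinely diverge is in how unitarity is established. The paper takes copies of $\cD_{T^*}$, writes the wrap-around block explicitly in Halmos form $\binom{D_{T^*}}{-VT^*}$ (with $V:\cD_T \to \cD_{T^*}$ a unitary), and verifies $U^*U=I$ by a computation that relies on the intertwining identity $TD_T = D_{T^*}T$. You never write the wrap-around block at all: since $\binom{T}{D_T}$ is isometric, its range $R_0 \subseteq H \oplus \cD_T^{(1)}$ has codimension exactly $d_T$, so an abstract unitary from $\cD_T^{(N)}$ onto $W = (H\oplus\cD_T^{(1)})\ominus R_0$ exists, and $U$ is unitary because it carries orthogonal summands isometrically onto orthogonal subspaces filling $K$. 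Your argument is softer and more elementary --- no intertwining identity, no matrix algebra --- and it makes the dimension count $n+Nd_T$ visibly the driving mechanism; it also makes transparent the non-uniqueness of $N$-minimal dilations discussed in Section 2 of the paper, since any unitary onto $W$ works (the paper's explicit entries are just one concrete realization of it). What you give up is explicitness: the paper's formula exhibits a concrete $U$, which it reuses later, e.g.\ the explicit $U_N$ for $T=0$ in the ergodic-theorem discussion. Your induction giving $P_H U^k P_H = T^k$ for $k\leq N$, the inductive capture of each $\cD_T^{(j)}$ for $N$-minimality, and the deduction of minimality from $N$-minimality are all sound.
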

\begin{proof}
Let $V$ be a unitary from $\cD_{T}$ onto $\cD_{T^*}$. On the space 
\bes
K = H \oplus \underbrace{\cD_{T^*} \oplus \cdots \oplus \cD_{T^*}}_{N \textrm{ times}}
\ees
we define
\bes
U = \begin{pmatrix}
T &  & & & D_{T^*} \\
V D_T & & &  & - VT^* \\
 & I &  & &\\
 &   & \ddots & & \\
 &   & & I & 0
\end{pmatrix} .
\ees
The empty slots are understood as $0$'s, and the sub-diagonal dots are all $I$'s.
Using the identity $T D_T = D_{T^*}T$, we find that $U^*U = I$, thus $U U^* = I$. Eq. (\ref{eq:dilation}) for $k \leq N$ is verified mechanically. The $N$-minimality is obvious, and minimality follows. 
\end{proof}

\begin{remark}\emph{
Taking the inductive limit of the above dilations in an obvious way, or, equivalently, the strong operator limit (once assembled inside an infinite dimensional space), one obtains the minimal \emph{isometric} dilation of Sz.-Nagy.}
\end{remark}

The elementary proof of von Neumann's Inequality for operators on a finite dimensional space is the same as the one we gave above, with the following changes. Given $T$ and a polynomial $p$ of degree $N$, construct a unitary $N$-dilation $U$ for $T$, acting on a space of dimension $M = n + Nd_T$  (this takes the place of constructing the full unitary dilation). Instead of invoking the spectral theorem for normal operators on Hilbert space, we use the fact that every unitary matrix is unitarily diagonalizable. Thus we may assume that $U = \textrm{diag}(\lambda_1, \ldots, \lambda_{M})$. Now, $p(U) = \textrm{diag}(p(\lambda_1) \ldots, p(\lambda_M))$, hence $\|p(U)\| = \sup_i |p(\lambda_i)| \leq \|p\|_\infty$ (see Theorem \ref{thm:vNsharp} below for a sharpening of this result). 

It should also be mentioned that von Neumann's Inequality for operators on an infinite dimensional space follows from the finite dimensional case, see \cite[p. 15]{Pisier}.

According to Sz.-Nagy \cite[Section 4]{SzNapp}, Theorem \ref{thm:Ndil}  was first published in 1954 by Egerv\'ary \cite{Egervary} (more or less: the dilation in \cite{Egervary} is not minimal). It is clear that Halmos thought of this construction, but did not think that it is a big deal (evidence: the discussion in \cite[Problem 227]{HalmosBook}). It seems that the idea of $N$-dilations was abandoned. One possible reason is that Sz.-Nagy's paper \cite{SzNDil} on the existence of a unitary dilation appeared already in 1953. Another reason might be that confining dilation theory to finite dimensions makes it a difficult subject. The rest of these notes contains further discussion of this finite dimensional approach to dilation theory. We will present what we know about this topic, alongside some neat results that seem to have been overlooked as well as open problems.

\section{A little more on minimality}

There are many minimal $N$-dilations which are not isomorphic. This can be seen by taking $T = 0 \in \mb{C}$, and considering the two minimal $1$-dilations given by eq. (\ref{eq:Halmos}), on the one hand, and by the $2$-dilation constructed above, on the other. Even under the assumption of $N$-minimality, or that the dilation act on a space of dimension $n+N d_T$, the dilation is not unique. Consider, for example, the two non-isomorphic $1$-dilations of $T = 0 \in \mb{C}$ given by
\bes
U_1 = \begin{pmatrix} 0 & 1 \\ 1 & 0\end{pmatrix} \,\, \textrm{  \emph{and}  } \,\, U_1 = \begin{pmatrix} 0 & -1 \\ 1 & 0\end{pmatrix} .
\ees
The following proposition says that nonetheless, to a certain extent, minimal dilations look pretty much the same.
\begin{proposition}
Let $T$ be a contraction on $H$, and let $V$ on $L$ be a unitary $N$-dilation for $T$. Let $U$ on $K$ be an $N$-minimal $N$-dilation of $T$. Then there exits an isometry $W: K \rightarrow L$ such that $W\big|_H = I_H$ and 
\be\label{eq:almostint}
V W g = W U g ,
\ee
for all $g \in \textrm{\emph{span}}\{U^k h : h \in H, k = 0, \ldots, N-1\}$.
\end{proposition}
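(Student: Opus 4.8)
The plan is to build $W$ directly on the spanning vectors of $K$ and verify that it is a well-defined isometry by comparing Gram matrices. Concretely, for $h \in H$ and $0 \leq k \leq N$, I would set $W U^k h = V^k h$ and extend by linearity. Since $U$ is $N$-minimal, $K = \textrm{span}\{U^k h : h \in H, k = 0, \ldots, N\}$, so this prescription is defined on all of $K$ once we know it is consistent; consistency and isometry are the same computation.

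The heart of the matter is that this map preserves inner products. For $h, h' \in H$ and $0 \leq j, k \leq N$, unitarity of $U$ and $V$ gives
\[ \langle U^j h, U^k h'\rangle = \langle h, U^{k-j} h'\rangle \quad\textrm{and}\quad \langle V^j h, V^k h'\rangle = \langle h, V^{k-j} h'\rangle. \]
Writing $m = k-j$, we have $|m| \leq N$. If $0 \leq m \leq N$, then since $P_H$ is self-adjoint and $h, h' \in H$,
\[ \langle h, U^m h'\rangle = \langle h, P_H U^m P_H h'\rangle = \langle h, T^m h'\rangle, \]
and the identical computation for $V$ yields $\langle h, V^m h'\rangle = \langle h, T^m h'\rangle$, using that both are $N$-dilations and $m \leq N$. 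If $m < 0$, I would apply the same reasoning to the adjoint, $\langle h, U^m h'\rangle = \overline{\langle U^{-m} h', h\rangle} = \overline{\langle T^{-m} h', h\rangle}$ with $-m \leq N$, and likewise for $V$. In every case the $U$- and $V$-inner products coincide, so the two families $\{U^k h\}$ and $\{V^k h\}$ have identical Gram matrices. Equality of Gram matrices forces any linear dependence among the $U^k h$ to be matched by the same dependence among the $V^k h$; hence $W$ is well-defined, and it is an isometry.

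Finally, taking $k = 0$ gives $Wh = h$, so $W\big|_H = I_H$. For the intertwining relation it suffices, by linearity, to check it on the spanning vectors $g = U^k h$ with $0 \leq k \leq N-1$: for such $g$ we have $Ug = U^{k+1} h$ with $k+1 \leq N$, whence $W U g = V^{k+1} h = V(V^k h) = VWg$, which is exactly (\ref{eq:almostint}). The only real obstacle is bookkeeping on the exponents, and it pinpoints why the statement cannot be strengthened: the identification $\langle h, U^m h'\rangle = \langle h, T^m h'\rangle$ is available precisely when $|m| \leq N$, which is guaranteed by $0 \leq j, k \leq N$; and the intertwining can be asserted only for $U^k h$ with $k \leq N-1$, since applying $U$ to $U^N h$ produces $U^{N+1} h$, a vector on which $W$ is not controlled by the dilation data. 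This is exactly why (\ref{eq:almostint}) holds on the smaller span rather than on all of $K$.
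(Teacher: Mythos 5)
Your proof is correct and is essentially the paper's own argument: both define $W$ on the spanning set by $W U^k h = V^k h$, use the dilation property to show the Gram matrices of $\{U^k h\}$ and $\{V^k h\}$ coincide (the paper just normalizes $k \leq m$ where you treat both signs of $k-j$), and deduce well-definedness, isometry, and the intertwining relation on the smaller span. Your closing remark on why (\ref{eq:almostint}) cannot extend past exponent $N-1$ is a nice addition, but the proof itself matches the paper's.
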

\begin{proof}
We define $W U^k h = V^k h$ for $k = 0,1,\ldots, N$ and $h \in H$. This is a well defined isometry since $U$ is $N$-minimal and since for $h,g \in H$, $0 \leq k \leq m\leq N$,
\begin{align*}
\langle W U^k h, WU^m g \rangle &= \langle V^k h, V^m g \rangle \\
&= \langle h,V^{m-k}g \rangle \\
&= \langle h,T^{m-k}g \rangle \\
&= \langle U^k h, U^m g \rangle .
\end{align*}
Eq. (\ref{eq:almostint}) follows.
\end{proof}
Thus, all $4$-dilations of $T$ essentially have the form
\bes
\begin{pmatrix}
T     &  &  &  & * \\
D_T &  &  &  & * \\
     & I &   &  &  \\
     &  & I  &  &  \\
     &  &   & I &  \\
     &  &   &  & * 
\end{pmatrix},
\ees
(the dimensions of the last row and column are not necessarily $d_T$), and a similar statement can be made for $N$-dilations. In particular, we have the following corollary.
\begin{corollary}
If $T$ acts on $H$, the minimal dimension on which a unitary $N$-dilation of $T$ can act is $n+ N d_T$. A unitary $N$-dilation is $N$-minimal if and only if it acts on a space of dimension $n+Nd_T$.
\end{corollary}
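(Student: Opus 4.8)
The plan is to deduce everything from the existence result of Theorem~\ref{thm:Ndil} together with the comparison Proposition above, using only the elementary fact that an isometry between finite-dimensional inner product spaces is injective (so it cannot decrease dimension) and is onto, hence unitary, precisely when the two dimensions agree. Throughout I would fix, once and for all, the concrete $N$-minimal dilation $U$ on $K$ furnished by Theorem~\ref{thm:Ndil}, which satisfies $\dim K = n + N d_T$.

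For the statement about the minimal dimension, let $V$ on $L$ be an arbitrary unitary $N$-dilation of $T$. Applying the Proposition above to this $V$ and to the chosen $N$-minimal $U$, I obtain an isometry $W\colon K \to L$. Since $W$ is injective, $\dim L \ge \dim K = n + N d_T$. As Theorem~\ref{thm:Ndil} already exhibits a dilation attaining this value, the minimal possible dimension is exactly $n + N d_T$.

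For the equivalence, the key preliminary observation is that the relations $W|_H = I_H$ and $VWg = WUg$ (for $g$ in the span of $U^k h$, $k \le N-1$) furnished by the Proposition actually force $W U^k h = V^k h$ for every $h \in H$ and every $0 \le k \le N$; this follows by a short induction on $k$. Consequently, since $N$-minimality of $U$ gives $K = \mathrm{span}\{U^k h : h \in H,\ 0 \le k \le N\}$, the image of $W$ is exactly $\mathrm{span}\{V^k h : h \in H,\ 0 \le k \le N\}$.

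Now both implications are immediate. If $V$ is $N$-minimal, then this span is all of $L$, so $W$ is a surjective isometry, i.e.\ a unitary, whence $\dim L = \dim K = n + N d_T$. Conversely, if $\dim L = n + N d_T = \dim K$, then the injective isometry $W$ is automatically onto, so $L$ equals the image of $W$, which is the span above; this is exactly the assertion that $V$ is $N$-minimal. I do not anticipate a genuine obstacle: the only point requiring care is the short induction upgrading the intertwining relation from the span up to $N-1$ to the identity $W U^k h = V^k h$ for $k \le N$, after which the whole corollary reduces to bookkeeping with the dimensions attached to the isometry $W$.
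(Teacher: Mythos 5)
Your proposal is correct and follows essentially the route the paper intends: the corollary is exactly the dimension-counting consequence of the Proposition (the isometry $W$ forces $\dim L \geq n + Nd_T$, and surjectivity of $W$ characterizes $N$-minimality), with Theorem~\ref{thm:Ndil} supplying the dilation that attains the bound. Your short induction recovering $WU^kh = V^kh$ for $0 \leq k \leq N$ from the stated intertwining relation is a fine way to use the Proposition as a black box; in the paper's proof of the Proposition this identity is in fact the definition of $W$, so nothing genuinely new is needed.
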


\section{An additional application and a non-application}

In his seminal paper \cite{SzNDil}, Sz.-Nagy presented three applications of his theorem on unitary dilations. We already discussed one application: the simple and elegant proof of von Neumann's Inequality. Another application given in that paper is a simple proof of a simple fact: if $T$ is a contraction and $h$ is an invariant vector for $T$, then $h$ is also invariant for $T^*$. The proof via dilations is as follows. $P_H U h = Th = h$. It follows that $Uh = h$. Thus $U^*h = h$, hence $T^*h = P_H U^* h = h$. Of course, no one would have any problem proving this fact without recourse to dilations, but you have to admit that this is elegant. Note that one only uses the fact that there exists a $1$-unitary dilation for $T$.

As another application of his dilation theorem, Sz.-Nagy gave a very neat proof of the following ergodic theorem:
\begin{theorem}Let $T$ be a contraction. Then
\be\label{eq:ergodic}
\lim_{N \rightarrow \infty} \frac{1}{N+1}\sum_{k=0}^N T^k
\ee
exists in the strong operator topology.
\end{theorem}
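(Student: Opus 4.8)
The plan is to reduce everything to the fixed-point structure of $T$ and to show that the Cesàro averages converge to the orthogonal projection of $H$ onto $\ker(T-I)$. The natural first move is to imitate the proof of von Neumann's inequality: dilate $T$ to a unitary and push the problem upstairs, where the spectral theorem (equivalently, diagonalization of a unitary matrix) makes the averages transparent. The obstruction — and the reason this belongs under ``non-application'' — is that the average $\frac{1}{N+1}\sum_{k=0}^{N}T^{k}$ involves powers of unboundedly large degree, whereas a unitary $N$-dilation from Theorem~\ref{thm:Ndil} only reproduces $T^{k}=P_H U^{k}P_H$ for $k\le N$, on a space of dimension $n+Nd_T$ that grows with $N$. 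No single finite-dimensional unitary captures the whole tail, so the compression trick cannot be run in one stroke; this is precisely where the finite-dimensional approach stalls.

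I would therefore carry out the computation directly, using only one consequence of dilation theory already recorded above, namely the invariant-vector fact that for a contraction $T$ one has $\ker(T-I)\subseteq\ker(T^{*}-I)$, and hence, applying the same to the contraction $T^{*}$, the equality $\ker(T-I)=\ker(T^{*}-I)$. First I would invoke the always-valid (in finite dimensions) orthogonal decomposition
\[
H=\mathrm{Im}(T-I)\oplus\mathrm{Im}(T-I)^{\perp}=\mathrm{Im}(T-I)\oplus\ker(T^{*}-I).
\]
Feeding in the invariant-vector identity converts this into the orthogonal splitting $H=\mathrm{Im}(T-I)\oplus\ker(T-I)$, which is the structural heart of the argument.

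The rest is a short calculation on each summand. On $\ker(T-I)$ every $T^{k}$ acts as the identity, so the average equals the identity and trivially converges. On $\mathrm{Im}(T-I)$ I would evaluate on a typical vector $g=(T-I)f$, where the sum telescopes:
\[
\frac{1}{N+1}\sum_{k=0}^{N}T^{k}(T-I)f=\frac{1}{N+1}\bigl(T^{N+1}-I\bigr)f,
\]
and since $\|T\|\le 1$ forces $\|T^{N+1}-I\|\le 2$, this tends to $0$. Combining the two summands shows the averages converge to the orthogonal projection of $H$ onto $\ker(T-I)$; in finite dimensions the strong and norm topologies coincide, so this settles the stated claim.

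The only genuinely non-trivial input is the orthogonality of the decomposition, i.e. the equality $\ker(T-I)=\ker(T^{*}-I)$; everything else is a telescoping sum and a crude norm bound. I would stress that this input is itself the dilation-theoretic fact proved just above, so the argument is not divorced from dilation theory — it simply cannot be routed through a single finite-dimensional unitary, which is exactly the sense in which the ergodic theorem is a ``non-application'' of the finite-dimensional dilations.
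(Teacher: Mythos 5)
Your proof is correct, but it takes a genuinely different route from the paper's. The paper follows Sz.-Nagy: take any full unitary dilation $U$ of $T$ (necessarily infinite dimensional when $T$ is not unitary), write $\frac{1}{N+1}\sum_{k=0}^N T^k = P_H\left(\frac{1}{N+1}\sum_{k=0}^N U^k\right)P_H$, and invoke von Neumann's Mean Ergodic Theorem for unitaries as a black box; the whole point of the surrounding section is that this argument needs a dilation valid for \emph{all} powers simultaneously and cannot be run with $N$-dilations. You instead give the direct Riesz-type splitting proof: $H = \mathrm{Im}(T-I)\oplus\ker(T^*-I) = \mathrm{Im}(T-I)\oplus\ker(T-I)$, using the invariant-vector fact $\ker(T-I)=\ker(T^*-I)$ for contractions, then the averages act as the identity on the kernel and telescope to $\frac{1}{N+1}(T^{N+1}-I)f \to 0$ on the range. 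All steps check out: the fixed-point identity is exactly the fact the paper derives from $1$-dilations, the telescoping is right, and in finite dimensions $\mathrm{Im}(T-I)$ is closed, so every vector orthogonal to the kernel is of the form $(T-I)f$. What your route buys: it is elementary and self-contained, it identifies the limit explicitly as the orthogonal projection onto $\ker(T-I)$, and it shows the theorem is accessible in finite dimensions without any infinite-dimensional machinery --- while still, as you note, resting on a dilation-theoretic ingredient. What the paper's route buys: it works verbatim on an arbitrary Hilbert space, whereas in infinite dimensions your argument needs an extra (standard) step --- $\mathrm{Im}(T-I)$ need not be closed, so one passes to its closure and uses the uniform bound $\left\|\frac{1}{N+1}\sum_{k=0}^N T^k\right\|\leq 1$ to extend convergence by density --- and it serves the expository purpose of the section, namely exhibiting a use of full dilations that $N$-dilations provably cannot replicate. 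Indeed your argument, specialized to unitaries, is essentially a proof of the very Mean Ergodic Theorem the paper cites, so in a sense you have inlined the black box rather than reduced to it.
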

\begin{proof}
Let $U$ be any unitary dilation of $T$. Then 
\bes
\frac{1}{N+1}\sum_{k=0}^N T^k = P_H \left(\frac{1}{N+1}\sum_{k=0}^N U^k \right)P_H,
\ees
so it suffices to prove the theorem for the special case where $T$ is a unitary. 
But the result for unitaries is the classical and well known Mean Ergodic Theorem of von Neumann (see \cite[Problem 228]{HalmosBook} for a proof and additional references).
\end{proof}

This application was brought in to show how a dilation that works for all powers can be infinitely more powerful than infinitely many $N$-dilations. Indeed, trying to imitate the above proof, but using $N$-dilations instead of dilations, we come to the expressions
\bes
\frac{1}{N+1}\sum_{k=0}^N T^k = P_H \left(\frac{1}{N+1}\sum_{k=0}^N U_N^k \right)P_H,
\ees
where $U_N$ denotes a unitary $N$-dilation for $T$. One cannot use the Mean Ergodic Theorem (for unitary operators) because the rate of convergence is different for different unitaries. As an illustration, let $T = 0 \in \mb{C}$, and let us choose for $U_N$ the $2N+1$ dilation constructed in Theorem 
\ref{thm:Ndil}. Then $U_N$ is the $(2N+2) \times (2N + 2)$ matrix
\bes
U_N = \begin{pmatrix}
0 &  & & & 1 \\
1 & & &  & 0 \\
 & 1 &  & &\\
 &   & \ddots & & \\
 &   & & 1 & 0
\end{pmatrix} .
\ees
$U_N$ contains the scalar operator $u = \exp{\frac{2\pi i}{2N+2}}$ as a direct summand. But
\bes
\frac{1}{N+1}\sum_{k=0}^N u^k = \frac{2}{(N+1)(1-u)} \sim \frac{2(2N+2)}{2\pi i(N+1)} \sim \frac{2}{\pi i}.
\ees
This is far from the limit, which is $0$. If we would have chosen, in \emph{this} example, $U_N$ to be the $N$-minimal dilation constructed in Theorem \ref{thm:Ndil}, then everything would have worked out, but this is just a coincidence.

\section{Several commuting contractions}

\subsection{Dilations and von Neumann's inequality}

We remind the reader that throughout $H$ is a Hilbert space of finite dimension $n$. 

\begin{definition}
Let $T_1,\ldots,T_k$ be commuting contractions on $H$, and let $N \in \mb{N}$. A \emph{unitary $N$-dilation for $T_1, \ldots, T_k$} is a $k$-tuple of commuting unitaries $U_1,\ldots,U_k$ acting on a space $K\supseteq H$ such that
\be\label{eq:multidil}
T_1^{n_1} \cdots T_k^{n_k} = P_H U_1^{n_1} \cdots U_k^{n_k} P_H ,
\ee
for all  $n_1, \ldots, n_k$ satisfying $n_1 + \ldots + n_k \leq N$.
\end{definition}
The usual definition of unitary dilation of a $k$-tuple can now be phrased as follows: a \emph{unitary dilation} for $T_1,\ldots,T_k$ is a $k$-tuple $U_1,\ldots,U_k$ that is a unitary $N$-dilation for all $N \in \mb{N}$. If one of the $T_i$'s is not a unitary, then a unitary dilation, if it exists, must operate on a space $K$ of infinite dimension. 

Over the years, several conditions that guarantee the existence of a unitary dilation for a $k$-tuple of commuting contractions have been studied (see, e.g., Chapter I of \cite{SzNF70}, \cite{Opela}, \cite{StoSza} or \cite{CiStoSza} and the references within\footnote{Perhaps before anything else the reader might like to see  \cite{Arv10}.}). One of the simplest is the following.

\begin{definition}
Let $T_1,\ldots,T_k$ be operators on $H$. $T_1,\ldots,T_k$ are said to \emph{doubly commute} if for all $i \neq j$, $T_i$ commutes with $T_j$ and with $T_j^*$. 
\end{definition}
\begin{theorem}\label{thm:doublycommuting}
Let $T_1,\ldots,T_k$ be a $k$-tuple of doubly commuting contractions on $H$. Then for every $N$, the $k$-tuple $T_1,\ldots,T_k$ has a unitary $N$-dilation that acts on a space of dimension $(N+1)^kn$.
\end{theorem}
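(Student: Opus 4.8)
The plan is to prove the theorem by induction on $k$, dilating the contractions one at a time. At each step I take a tuple of doubly commuting contractions, replace one of them by a unitary $N$-dilation living on a space $N+1$ times larger, and carry the remaining operators along block-diagonally. Since one factor of $N+1$ is gained per step and there are $k$ steps, the final space has dimension $(N+1)^k n$, as required. The base case $k=0$ is trivial, so the heart of the argument is a single inductive step.

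For the one-step construction, suppose I have doubly commuting contractions $S_1, \ldots, S_m$ on a space $K$ of dimension $d$, and I wish to dilate $S_1$. Mimicking Theorem \ref{thm:Ndil}, but using full copies of $K$ rather than the defect spaces, I set $K' = K \otimes \mb{C}^{N+1}$ and define
\be
U = \begin{pmatrix}
S_1 & & & & D_{S_1^*} \\
D_{S_1} & & & & -S_1^* \\
 & I & & & \\
 & & \ddots & & \\
 & & & I & 0
\end{pmatrix},
\ee
where the empty slots are zero and the subdiagonal entries are all $I$. Using $S_1 D_{S_1} = D_{S_1^*} S_1$ one checks $U^* U = I$, so $U$ is unitary, and the relation $P_K U^{j} P_K = S_1^{j}$ for $j \le N$ is verified exactly as in Theorem \ref{thm:Ndil}; thus $U$ is a unitary $N$-dilation of $S_1$ on a space of dimension $(N+1)d$. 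The remaining operators I extend block-diagonally, setting $\tilde{S}_i = S_i \otimes I_{N+1} = \mathrm{diag}(S_i, \ldots, S_i)$ for $i \ge 2$; these are visibly contractions that doubly commute with one another.

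The main point — and the step where double commutativity is indispensable — is that each $\tilde{S}_i$ also doubly commutes with $U$. Because $S_i$ commutes with both $S_1$ and $S_1^*$, it commutes with $S_1^* S_1$ and $S_1 S_1^*$, hence with their continuous functions $D_{S_1}$ and $D_{S_1^*}$. Thus $S_i$ commutes with every block entry of $U$ and of $U^*$ (which involve only $S_1, S_1^*, D_{S_1}, D_{S_1^*}, I$), and therefore the block-scalar operator $\tilde{S}_i$ commutes with $U$ and with $U^*$. Had I assumed only that the $S_i$ commute, I would have no control over the commutator of $S_i$ with $D_{S_1^*}$, and the construction would break down; this is precisely the obstruction that confines the clean statement to the doubly commuting case, and it is the one genuinely delicate ingredient.

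It remains to propagate the dilation identity through the induction. With $H \subseteq K \subseteq K'$ and $P_H = P_H P_K$, for $h \in H$ and a monomial of total degree at most $N$ I would use commutativity to bring the power of the newly created unitary $U$ next to $h$, apply the single-variable relation $P_K U^{j} P_K = S_1^{j}$ (valid for $j \le N$) to return to $K$, and then invoke the inductive compression hypothesis on $K$ to recover $T_1^{n_1} \cdots T_k^{n_k}$; commutativity makes the ordering of the factors irrelevant. Iterating the one-step construction $k$ times converts $T_1, \ldots, T_k$ into pairwise doubly commuting (in particular commuting) unitaries on a space of dimension $(N+1)^k n$ that satisfy (\ref{eq:multidil}) whenever $n_1 + \cdots + n_k \le N$, completing the proof. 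Once the commutation check of the previous paragraph is in hand, the dimension bookkeeping and the compression identity are routine.
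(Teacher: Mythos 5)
Your proposal is correct and is essentially the paper's own proof: the paper likewise dilates one contraction at a time via the Egerv\'ary/Halmos-type unitary on $N+1$ full copies of the current space, extends the remaining operators block-diagonally, and iterates $k$ times to get dimension $(N+1)^k n$. Your write-up actually makes explicit two points the paper leaves to the reader --- that double commutation passes to the dilated tuple because each $S_i$ commutes with $D_{S_1}$ and $D_{S_1^*}$ (being functions of $S_1^*S_1$ and $S_1S_1^*$), and that the compression identity propagates through the iteration because the block-diagonal operators commute with the projection onto the first block.
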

\begin{proof}
We prove the theorem for $N=3$, the general case is proved with minor changes of notation.
On $L = H \oplus H \oplus H \oplus H$, let
\bes
V_1 = \begin{pmatrix}
T_1     &  &  &  D_{T_1^*} \\
D_{T_1} &  &  &  -T_1^* \\
     & I &   &    \\
     &  & I  &   
\end{pmatrix},
\ees
and for $i>1$ we define
\bes
V_i = \begin{pmatrix}
T_i     &  &  &   \\
 & T_i  &  &   \\
     &  & T_i   &    \\
     &  &   & T_i   
\end{pmatrix}.
\ees
Then $V_i$ is a contractive $3$-dilation of $T_i$, and $V_1$ is unitary. Furthermore, if $T_i$ is already unitary, so is $V_i$. Since $T_1,\ldots,T_k$ doubly commute, so do $V_1, \ldots, V_k$. Now on $L \oplus L \oplus L \oplus L$ we define
\bes
W_2 = \begin{pmatrix}
V_2     &  &  &  D_{V_2^*} \\
D_{V_2} &  &  &  -T_2^* \\
     & I &   &    \\
     &  & I  &   
\end{pmatrix},
\ees
and for $i\neq 2$ we define
\bes
W_i = \begin{pmatrix}
V_i     &  &  &   \\
 & V_i  &  &   \\
     &  & V_i   &    \\
     &  &   & V_i   
\end{pmatrix}.
\ees
After carrying out this step $k$ times, we obtain a $k$-tuple $U_1, \ldots, U_k$ which is a unitary $3$-dilation for $T_1,\ldots,T_k$.
\end{proof}

A motivation for studying unitary $N$-dilations is the following sharpening of von Neumann's inequality.
\begin{theorem}\label{thm:vNsharp}
Let $N \in \mb{N}$, and let $T_1,\ldots,T_k$ be $k$-tuple of commuting contractions on $H$ that has a unitary $N$-dilation acting on a finite dimensional Hilbert space $K$. Put $m=\dim K$. Then there exist $m$ points $\{w^i = (w_1^i, \ldots, w_k^i)\}_{i=1}^{m}$ on the $k$-torus $\mb{T}^k$ such that for every polynomial $p(z_1, \ldots, z_k)$ of degree less than or equal to  $N$, 
\bes
\|p(T_1, \ldots, T_k)\| \leq \max \{|p(w^i)| : i=1, \ldots, m\}.
\ees
In particular,
\bes
\|p(T_1, \ldots, T_k)\| \leq \|p\|_\infty := \sup \{|p(z_1, \ldots, z_k)| : |z_i|=1, i=1, \ldots, k\}.
\ees
\end{theorem}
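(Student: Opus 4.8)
The plan is to reduce everything to a simultaneous diagonalization of the dilating unitaries, exactly mirroring the elementary proof of von Neumann's inequality sketched earlier for a single contraction. Since $U_1,\ldots,U_k$ are commuting unitaries on the finite-dimensional space $K$, they form a commuting family of normal operators and are therefore simultaneously unitarily diagonalizable: there is an orthonormal basis $e_1,\ldots,e_m$ of $K$ and scalars $w_j^i$ with $U_j e_i = w_j^i e_i$ for all $i,j$. Because each $U_j$ is unitary, every $w_j^i$ has modulus one, so the points $w^i = (w_1^i,\ldots,w_k^i)$ lie on $\mb{T}^k$; these are the $m$ points whose existence is asserted.

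Next I would pass from monomials to arbitrary polynomials of degree at most $N$. For any multi-index with $n_1+\cdots+n_k \le N$, the dilation relation (\ref{eq:multidil}) gives $T_1^{n_1}\cdots T_k^{n_k} = P_H U_1^{n_1}\cdots U_k^{n_k} P_H$. Writing $p$ as a linear combination of such monomials and summing, one obtains
\bes
p(T_1,\ldots,T_k) = P_H\, p(U_1,\ldots,U_k)\, P_H .
\ees
The degree restriction $\deg p \le N$ is precisely what guarantees that every monomial occurring in $p$ is covered by (\ref{eq:multidil}); this is the one place where the bookkeeping must be done with care. In the common eigenbasis we have $p(U_1,\ldots,U_k) e_i = p(w^i) e_i$, so $p(U_1,\ldots,U_k)$ is diagonal and $\|p(U_1,\ldots,U_k)\| = \max_i |p(w^i)|$.

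Finally I would estimate the norm by noting that compression to $H$ cannot increase it: for $h \in H$,
\bes
\|p(T_1,\ldots,T_k)h\| = \|P_H\, p(U_1,\ldots,U_k)\, h\| \le \|p(U_1,\ldots,U_k)\|\,\|h\|,
\ees
which yields $\|p(T_1,\ldots,T_k)\| \le \max_i |p(w^i)|$. The ``in particular'' clause is then immediate, since each $w^i \in \mb{T}^k$ forces $|p(w^i)| \le \|p\|_\infty$, whence the maximum over $i$ is bounded by $\|p\|_\infty$. I do not expect a genuine obstacle here: the only points requiring attention are that commutativity (not merely individual normality) is what allows a single basis to diagonalize all $k$ unitaries at once, and the degree bookkeeping mentioned above. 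Everything else is the finite-dimensional reprise of the unitary-dilation argument.
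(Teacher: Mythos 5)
Your proposal is correct and follows essentially the same route as the paper's own proof: simultaneous unitary diagonalization of the commuting unitaries, the identity $p(T_1,\ldots,T_k) = P_H\, p(U_1,\ldots,U_k)\, P_H$ for $\deg p \le N$, and the observation that compression cannot increase the norm. Your extra care with the degree bookkeeping and the modulus-one eigenvalues is fine but adds nothing beyond what the paper's argument implicitly uses.
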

\begin{proof}
Let $U_1, \ldots, U_k$ be a unitary $N$-dilation of $T_1,\ldots,T_k$ on a finite dimensional space $K$. As $U_1, \ldots, U_k$ are commuting unitaries, they are simultaneously unitarily diagonalizable. 
We may assume that $U_j$, $j=1, \ldots, k$, has the form 
\bes
U_j = \begin{pmatrix}
w^1_j     &  &  &   \\
 & w^2_j  &  &   \\
     &  & \ddots  &    \\
     &  &   &  w^{m}_j  
\end{pmatrix}.
\ees
Thus,
\bes
\|p(T_1, \ldots, T_k)\| = \|P_Hp(U_1, \ldots, U_k) P_H\| \leq  \left\|\begin{pmatrix}
p(w^1)     &  &  &   \\
 & p(w^2)  &  &   \\
     &  & \ddots  &    \\
     &  &   &  p(w^{m})  
\end{pmatrix} \right\|,
\ees
and the right hand side is equal to $\max \{|p(w^i)| : i=1, \ldots, m\}$.
\end{proof}

Every contraction has an $N$-dilation, so the above theorem is an interesting sharpening of von Neumann's inequality (for the expert it might be interesting to compare this sharpening with that provided in \cite{AM05}). In fact, even for the case of {\em scalar} operators it is non-trivial. 

\begin{corollary}\label{cor:scalar_vN}
Let $\zeta$ be a point in the unit disc $\mb{D} = \{z \in \mb{C} : |z|<1\}$, and let $N$ be a positive integer. Then there exist $N+1$ points $w_0, \ldots, w_N$ on the unit circle $\mb{T} = \{z \in \mb{C}: |z|=1\}$ such that for every polynomial $p$ of degree less than or equal to $N$, one has
\bes
|p(\zeta)| \leq \max_i |p(w_i)|.
\ees
\end{corollary}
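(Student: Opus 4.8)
The plan is to recognize this corollary as the one-variable, scalar instance of Theorem \ref{thm:vNsharp}, and simply feed that theorem the dilation produced by Theorem \ref{thm:Ndil}. Concretely, I would take $k=1$, let $H = \mb{C}$ be one-dimensional (so $n=1$), and regard the scalar $\zeta$ as the operator $T$ of multiplication by $\zeta$ on $H$. Since $\zeta \in \mb{D}$ we have $|\zeta| < 1$, so $T$ is a contraction and the hypotheses of Theorem \ref{thm:vNsharp} are available, provided we can supply a finite dimensional unitary $N$-dilation.

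The one quantity to compute is the defect dimension $d_T$. Here $D_T = (I - T^*T)^{1/2} = (1 - |\zeta|^2)^{1/2}$, which is a strictly positive scalar precisely because $|\zeta| < 1$; hence $\cD_T = \mb{C}$ and $d_T = 1$. By Theorem \ref{thm:Ndil}, $T$ therefore has a unitary $N$-dilation acting on a space $K$ of dimension $n + N d_T = 1 + N = N+1$. This dimension count is the crux of the matter: it is exactly what pins the number of points in the conclusion to $N+1$ rather than something larger.

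With this dilation in hand I would invoke Theorem \ref{thm:vNsharp} with $m = \dim K = N+1$. It produces $N+1$ points $w^1, \ldots, w^{N+1}$ on the one-torus $\mb{T}^1 = \mb{T}$ — namely the eigenvalues of the diagonalized unitary $N$-dilation $U$ — such that $|p(\zeta)| = \|p(T)\| \le \max_i |p(w^i)|$ for every polynomial $p$ of degree at most $N$. Relabelling $w^1, \ldots, w^{N+1}$ as $w_0, \ldots, w_N$ yields the assertion.

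There is essentially no obstacle here, since the whole content of the corollary is already packaged in the two theorems; the proof is a matter of correct specialization. The only point requiring a moment's care is that $\zeta$ must lie in the \emph{open} disc, for otherwise ($|\zeta| = 1$) one would have $d_T = 0$, the dilation would live on a one-dimensional space, and one could not manufacture $N+1$ circle points in the stated fashion (though in that degenerate case the inequality is trivial, with $w_0 = \zeta$). It is also worth emphasizing, as the genuinely nontrivial feature, that the points $w_i$ depend on $\zeta$ and $N$ but not on the particular polynomial $p$, which is what makes the statement a real sharpening of the scalar von Neumann inequality.
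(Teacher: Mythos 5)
Your proof is correct and is exactly the argument the paper intends: the corollary is stated as an immediate consequence of Theorem \ref{thm:vNsharp}, applied with $k=1$, $H=\mb{C}$, $T=\zeta$, where $d_T=1$ and Theorem \ref{thm:Ndil} supplies the unitary $N$-dilation on a space of dimension $n+Nd_T=N+1$. Your dimension count and the observation that the points $w_i$ are independent of $p$ capture precisely what makes the statement sharp, so there is nothing to add.
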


Following an example of Kaijser and Varopoulos \cite{Varopoulos}, Holbrook showed \cite{Hol99} that there exist three $4\times 4$ commuting matrices $A_1, A_2, A_3$ with $\|A_i\| \leq 1$ such that for the polynomial $p(x,y,z) = x^2 + y^2 + z^2 - 2xy -2xz -2yz$ one has
\bes
\|p(A_1, A_2, A_3)\| = \frac{6}{5}\|p\|_\infty.
\ees
This shows that even a $2$-dilation may not exist for three commuting contractions, regardless of the restrictions we put on the dimensionality of the dilation space $K$. In fact, Parrott \cite{Parrott} constructed an example of three commuting operators (acting on a four dimensional space) for which there is no $1$-dilation (see also \cite[p. 909]{HalmosTen}). The proof that Parrott's example is not a $1$-dilation does not involve violation of von Neumann's Inequality.

It is interesting to note that  von Neumann's inequality holds for any $k$-tuple of commuting $2\times 2$ contractions (see \cite[p. 21]{Drury} or \cite{Hol90}). Whether or not this is true for $3 \times 3$ matrices is an open problem. 

\vspace{0.2cm}
\noindent {\bf Problem A:}
\emph{Do there exist $k$ commuting $3\times 3$ matrices $A_1, \ldots, A_k$ with $\|A_i\| \leq 1$ for $i=1,\ldots,k$, such that there is some polynomial $p$ in $k$ variables for which $\|p(A_1, \ldots, A_k)\| > \|p\|_\infty$ ?}
\vspace{0.2cm}

In \cite{Hol99} it is mentioned that evidence suggests that the answer is negative. It is rather humbling to know that even the case $k=3$ is still open.

\subsection{Regular dilations}

For $m = (m_1, \ldots, m_k) \in \mb{Z}^k$, let us define
\bes
T(m) =  (T_1^{{m_1}_-} \cdots T_k^{{m_k}_-})^* T_1^{{m_1}_+} \cdots T_k^{{m_k}_+},
\ees
where we use the usual notation of positive and negative parts of a number: $x_+ = \max\{x,0\}$ and $x_- = x_+ - x$. 
Examining the proof of Theorem \ref{thm:doublycommuting}, we find that the unitary $N$-dilation $U_1,\ldots,U_k$ for $T_1,\ldots,T_k$ satisfies 
\be\label{eq:regulardil}
T(m) = P_H U(m) P_H 
\ee
 for every $m \in \mb{Z}^k$ such that $|m| := |m_1| + \ldots + |m_k| \leq N$, which is much stronger than (\ref{eq:multidil}). Indeed, returning to the notation of the proof, it is easy to see that $T(m) = P_H V(m) P_H$ for all $m \in \mb{Z}^k$ with $|m|\leq N$. Similarly, $P_H W(m) P_H = P_H P_L W(m) P_L P_H = P_H V(m) P_H = T(m)$. Continuing this way, we find that $P_H U(m) P_H =T(m)$. 

A unitary dilation satisfying (\ref{eq:regulardil}) for all $m \in \mb{Z}^k$ is said to be a \emph{regular} dilation. Following this terminology, we will call a $k$-tuple $U_1, \ldots, U_k$ of commuting unitaries a \emph{regular $N$-dilation} for $T_1, \ldots, T_k$ if (\ref{eq:regulardil}) holds for all $m \in \mb{Z}^k$ with $|m|\leq N$. 

\begin{remark}\emph{
Note that if one replaces the unitary $N$-dilations appearing in the proof of Theorem \ref{thm:doublycommuting} by the minimal isometric Sz.-Nagy dilations, then one gets a proof that every $k$-tuple of doubly commuting contractions has a regular, doubly commuting isometric dilation. To our knowledge, this proof is new.}
\end{remark}

There are several known conditions that ensure that a $k$-tuple of contractions has a regular dilation, and one of them is that the $k$-tuple doubly commute (see Section I.9, \cite{SzNF70}). A necessary and sufficient condition for a a $k$-tuple $T_1, \ldots, T_k$ to have a regular unitary dilation is that for all $u \subseteq \{1, \ldots, k\}$, we have the operator inequality
\be\label{eq:oi}
\sum_{v \subseteq u} (-1)^{|v|}T(e(v))^*T(e(v)) \geq 0,
\ee
where $e(v) \in \{0,1\}^k$ is $k$-tuple that has $1$ in the $i$th slot if and only if $i \in v$ \cite[Theorem I.9.1]{SzNF70}. This leads us to ask the following question.

\vspace{0.2cm}
\noindent {\bf Problem B:}
\emph{For $k$-tuples $T_1,\ldots,T_k$ of commuting contractions on $H$, are the following two conditions equivalent?
\begin{enumerate}
\item For every $N \in \mb{N}$, $T_1,\ldots,T_k$ has a regular unitary $N$-dilation on a finite dimensional Hilbert space $K$.
\item Inequality (\ref{eq:oi}) holds for all $u \subseteq \{1, \ldots, k\}$.
\end{enumerate}}

\subsection{Further consequences of $N$-dilations}

Using $N$-dilations one can obtain the following more precise result, which implies Theorem \ref{thm:vNsharp} immediately.
\begin{theorem}\label{thm:convex_sum}
Let $N \in \mb{N}$, and let $T_1,\ldots,T_k$ be $k$-tuple of commuting contractions on $H$ that has a unitary $N$-dilation acting on a finite dimensional Hilbert space $K$, with $\dim K = m$. Then there exist $m$ points $\{w^i = (w_1^i, \ldots, w_k^i)\}_{i=1}^{m}$ on the $k$-torus $\mb{T}^k$, and $m$ positive operators $A_1 \ldots, A_{m} \in B(H)$ satisfying $\sum A_i = I_H$, such that for every polynomial $p(z_1, \ldots, z_k)$ of degree less than or equal to  $N$, 
\be\label{eq:convex_sum}
p(T_1, \ldots, T_k) = \sum_{i=1}^{m} p(w^i_1, \ldots, w^i_k) A_i.
\ee
\end{theorem}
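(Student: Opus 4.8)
The plan is to mimic the diagonalization argument from the proof of Theorem \ref{thm:vNsharp}, but instead of merely estimating the norm of $p(T_1,\ldots,T_k)=P_H p(U_1,\ldots,U_k)P_H$, I would compute this compression as an honest operator-valued expression. First I would invoke the fact that the commuting unitaries $U_1,\ldots,U_k$ are simultaneously unitarily diagonalizable, so after conjugating by a single unitary $S$ on $K$ we may assume each $U_j=\textrm{diag}(w^1_j,\ldots,w^m_j)$, with the $i$th common eigenvector giving the point $w^i=(w^i_1,\ldots,w^i_k)\in\mb{T}^k$. For any polynomial $p$ of degree at most $N$, the dilation property (\ref{eq:multidil}) gives $p(T_1,\ldots,T_k)=P_H\,p(U_1,\ldots,U_k)\,P_H$, and in the diagonalizing basis $p(U_1,\ldots,U_k)=\textrm{diag}(p(w^1),\ldots,p(w^m))$.

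The key step is to read off the operators $A_i$ from the compression of the rank-one spectral projections of the $U_j$'s. Concretely, let $e_1,\ldots,e_m$ be the common orthonormal eigenbasis of the $U_j$, and let $E_i$ be the orthogonal projection of $K$ onto the line $\mb{C}e_i$. Then $p(U_1,\ldots,U_k)=\sum_{i=1}^m p(w^i)E_i$, so compressing to $H$ yields
\be\label{eq:Aidef}
p(T_1,\ldots,T_k)=P_H\Big(\sum_{i=1}^m p(w^i)E_i\Big)P_H=\sum_{i=1}^m p(w^i)\,P_H E_i P_H.
\ee
The natural candidate is therefore $A_i:=P_H E_i P_H\in B(H)$, viewed as operators on $H$ via the identification $P_H E_i P_H=P_H E_i\big|_H$. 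Each $A_i$ is manifestly positive, being of the form $(P_H E_i)(E_i P_H)=(E_iP_H)^*(E_iP_H)$ since $E_i$ is a self-adjoint idempotent. That $\sum_i A_i=I_H$ follows because $\sum_i E_i=I_K$ (the $E_i$ resolve the identity of $K$), whence $\sum_i P_HE_iP_H=P_H I_K P_H=P_H\big|_H=I_H$.

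I expect the main subtlety to be purely notational rather than mathematical: one must be careful that $A_i$ is to be regarded as an operator \emph{on $H$}, i.e. as $P_HE_i|_H$ with $P_H$ restricting the codomain back to $H$, so that (\ref{eq:convex_sum}) is an identity in $B(H)$ and not in $B(K)$. With that convention fixed, equation (\ref{eq:Aidef}) is exactly (\ref{eq:convex_sum}), and the positivity and the partition-of-unity property drop out immediately from $E_i=E_i^*=E_i^2$ and $\sum_i E_i=I_K$. Finally I would remark that Theorem \ref{thm:vNsharp} follows at once: applying the triangle inequality in (\ref{eq:convex_sum}) and using $\sum_i A_i=I_H$ with $A_i\geq 0$ gives $\|p(T_1,\ldots,T_k)\|\leq\max_i|p(w^i)|\,\|\sum_i A_i\|=\max_i|p(w^i)|$, so the representation is genuinely a refinement in which the matrix norm is replaced by a convex-type combination of point evaluations.
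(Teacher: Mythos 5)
Your proposal is correct and follows essentially the same route as the paper: simultaneous diagonalization of the commuting unitaries, writing $p(U_1,\ldots,U_k)$ as a sum of scalars times the rank-one spectral projections $e_ie_i^*$, and compressing to $H$ to obtain $A_i = P_H e_i e_i^* P_H = (P_H e_i)(P_H e_i)^*$. Your explicit verification of positivity and of $\sum_i A_i = I_H$, and the remark that Theorem \ref{thm:vNsharp} follows immediately, are exactly the (implicit) content of the paper's argument.
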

\begin{proof}
Using the notation of the proof of Theorem \ref{thm:vNsharp}, we have that $U_1, \ldots, U_k$ are all diagonal with respect to an orthonormal basis $\{e_1, \ldots, e_m\}$. Therefore, for all $j=1, \ldots, k$,
\bes
U_j = \sum_{i=1}^m w_j^i e_i e_i^*
\ees
and so for any polynomial $p$, $\deg p \leq N$,
\bes
p(U_1, \ldots, U_k) = \sum_{i=1}^m p(w_1^i, \ldots, w_k^i) e_i e_i^* .
\ees
Since $U_1, \ldots, U_k$ is an $N$-dilation for $T_1, \ldots, T_k$, we have
\bes
p(T_1, \ldots, T_k) = P_H p(U_1, \ldots, U_k) P_H =   \sum_{i=1}^m p(w_1^i, \ldots, w_k^i)  P_H e_i e_i^* P_H .
\ees
This gives (\ref{eq:convex_sum}) with $A_i = (P_H e_i) (P_H e_i)^*$.
\end{proof}

Theorems \ref{thm:doublycommuting} and \ref{thm:convex_sum} together give the following interesting result about polynomials:
\begin{corollary}
Let $N \in \mb{N}$, and let $t_1,\ldots,t_k$ be $k$-tuple of complex numbers in the unit disc $\mb{D}$. Put $m =(N+1)^k$. Then there exist $m$ points $\{w^i = (w_1^i, \ldots, w_k^i)\}_{i=1}^{m}$ on the $k$-torus $\mb{T}^k$, and $m$ nonnegative numbers $a_1 \ldots, a_{m}$ satisfying $\sum a_i = 1$, such that for every polynomial $p(z_1, \ldots, z_k)$ of degree less than or equal to  $N$, 
\be\label{eq:cubature}
p(t_1, \ldots, t_k) = \sum_{i=1}^{m} a_i p(w^i_1, \ldots, w^i_k) .
\ee
\end{corollary}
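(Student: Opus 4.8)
The plan is to deduce this corollary by specializing Theorem \ref{thm:convex_sum} to the case of scalar operators on a one-dimensional Hilbert space, using Theorem \ref{thm:doublycommuting} to supply a dilation of the correct dimension. Concretely, I would take $H = \mb{C}$, so that $n = \dim H = 1$, and set $T_j = t_j$ for $j = 1, \ldots, k$, regarding each $t_j$ as the operator of multiplication by $t_j$ on $\mb{C}$. Since $|t_j| < 1$ we have $\|T_j\| = |t_j| \leq 1$, so the $T_j$ are contractions, and because scalars commute with everything (including their adjoints), the tuple $T_1, \ldots, T_k$ trivially doubly commutes.

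The first step is to invoke Theorem \ref{thm:doublycommuting}: for every $N$, the doubly commuting tuple $T_1, \ldots, T_k$ admits a unitary $N$-dilation acting on a space $K$ of dimension $(N+1)^k n = (N+1)^k = m$, which is exactly the value of $m$ prescribed in the statement. The second step is to feed this dilation into Theorem \ref{thm:convex_sum}. That theorem then produces $m$ points $w^i \in \mb{T}^k$ and $m$ positive operators $A_1, \ldots, A_m \in B(H)$ with $\sum_i A_i = I_H$, such that $p(T_1, \ldots, T_k) = \sum_i p(w^i) A_i$ for every polynomial $p$ of degree at most $N$.

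What remains is to translate the operator-theoretic conclusion into the scalar statement. Since $\dim H = 1$, each positive operator $A_i \in B(\mb{C})$ is simply a nonnegative real number $a_i \geq 0$, the normalization $\sum_i A_i = I_H$ becomes $\sum_i a_i = 1$, and $p(T_1, \ldots, T_k)$ is just the scalar $p(t_1, \ldots, t_k)$. Substituting these identifications into (\ref{eq:convex_sum}) yields precisely (\ref{eq:cubature}).

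There is no genuine obstacle here; the content is entirely carried by the two cited theorems, and the corollary is really just their combination read in the one-dimensional case. The only points requiring a word of care are verifying that the dimension count in Theorem \ref{thm:doublycommuting} collapses to $(N+1)^k$ when $n = 1$, and recording that positivity of $A_i$ on a one-dimensional space means $a_i \geq 0$ — which is exactly what upgrades (\ref{eq:cubature}) from a mere linear identity to a bona fide \emph{cubature formula}, expressing $p(t_1,\ldots,t_k)$ as a genuine convex combination of the boundary values $p(w^i)$.
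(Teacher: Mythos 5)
Your proposal is correct and is exactly the argument the paper intends: the paper states this corollary as the immediate combination of Theorems \ref{thm:doublycommuting} and \ref{thm:convex_sum}, applied with $H=\mb{C}$ so that the scalars $t_j$ form a doubly commuting tuple of contractions and the positive operators $A_i$ become nonnegative weights summing to $1$. Your dimension count $(N+1)^k\cdot 1 = m$ and the identification of $B(\mb{C})$ with $\mb{C}$ are precisely the (only) details needed.
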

Given an integer $N$ and a point $(t_1, \ldots, t_k) \in \mb{D}^k$, the existence of a finite number of points $w^1, \ldots, w^m$ on the torus $\mb{T}^k$, together with a finite number of weights $a_1, \ldots, a_m$ such that (\ref{eq:cubature}) holds for all polynomials of degree less than or equal to $N$ can be obtained also by classical analytical means. One uses the Poisson integral on the torus together with a classical (and more general) result of Tchakaloff \cite{Tchakaloff} (see also \cite{Putinar}). Note, however, how the elementary proof that we gave above suggests explicitly a way to find the points $w^1, \ldots, w^m$ and the weights $a_1, \ldots, a_m$.

\section{Two commuting contractions: ``commutant lifting" and Ando's Theorem}

As we discussed in the previous section, three or more commuting contractions might not have a unitary dilation. In contrast, for two commuting contractions, there is the following theorem.
\begin{theorem}{\bf Ando's Dilation Theorem \cite{Ando}.}
Every pair of commuting contractions has a unitary dilation.
\end{theorem}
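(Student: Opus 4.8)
The plan is to prove Ando's Theorem following the classical route of first establishing an isometric dilation for the pair and then passing to a unitary dilation by a standard extension argument. Since the statement is about arbitrary commuting contractions (not doubly commuting, where Theorem \ref{thm:doublycommuting} already applies), the key difficulty is that the two-variable analogue of the clean formula used there fails, and one must find a genuinely new commuting structure on the dilation space.

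First I would reduce to the problem of simultaneously dilating $T_1, T_2$ to a pair of commuting \emph{isometries} $V_1, V_2$ on a space $K \supseteq H$. Once such commuting isometries are in hand, the minimal unitary dilation is obtained by the Sz.-Nagy procedure applied to the pair: one embeds $K$ into a larger space on which $V_1, V_2$ extend to commuting unitaries $U_1, U_2$ (this is where I would invoke that a single isometry dilates to a unitary, and check that commutativity is preserved by taking an appropriate inductive/strong limit as in the Remark following Theorem \ref{thm:Ndil}). The reason two commuting isometries dilate to two commuting unitaries, while the analogous statement for contractions can fail for three or more operators, is precisely what makes $k=2$ special.

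The heart of the matter is constructing the commuting isometric dilation. The natural model space is the Sz.-Nagy--Sch\"affer space $K = H \oplus \cD_{T_1} \oplus \cD_{T_1} \oplus \cdots$, on which $T_1$ dilates to the standard isometry $V_1$ built from $D_{T_1}$ and the shift. The task is then to produce a second isometry $V_2$ that dilates $T_2$ and commutes with $V_1$. I would look for $V_2$ in the form of an operator matrix whose $(0,0)$ entry is $T_2$ and whose action on the defect coordinates is governed by some contraction on $\cD_{T_1} \oplus \cD_{T_2}$ (or an isometry thereof). Writing out the commutation relation $V_1 V_2 = V_2 V_1$ level by level reduces, after using $T_1 T_2 = T_2 T_1$ and the defect identities $T_i D_{T_i} = D_{T_i^*} T_i$, to a single intertwining/isometry condition. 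The classical resolution is to show that the two contractions $\begin{pmatrix} D_{T_2} T_1 \\ D_{T_1} \end{pmatrix}$ and $\begin{pmatrix} T_2 D_{T_1} \\ D_{T_2} \end{pmatrix}$, viewed as maps into $\cD_{T_2} \oplus \cD_{T_1}$, have the same defect (their adjoints induce the same form, again by the commutation relation), so there exists a unitary $W$ on $\cD_{T_1} \oplus \cD_{T_2}$ intertwining them; this $W$ is exactly the data needed to define $V_2$ on the defect coordinates.

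The step I expect to be the main obstacle is this last existence-of-$W$ argument: verifying that the two column contractions have matching ranges/defect spaces so that the connecting partial isometry can be completed to a unitary, and then confirming that the resulting $V_2$ is genuinely an isometry \emph{and} commutes with $V_1$ on all coordinate levels simultaneously. This is the point where commutativity $T_1 T_2 = T_2 T_1$ is used in an essential, non-formal way, and where the symmetry between the two contractions must be exploited carefully; the bookkeeping of the operator-matrix entries is routine but error-prone, and getting the unitary $W$ to respect both the isometry condition for $V_2$ and the commutation with $V_1$ is the crux on which the whole construction turns.
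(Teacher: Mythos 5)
First, a structural point: the paper does not prove this theorem at all. Ando's Dilation Theorem is stated there as a quoted result from \cite{Ando}, and the surrounding discussion (Problem C in particular) makes clear that the authors know of no finite-dimensional, $N$-dilation route to it. So there is no ``paper proof'' to compare against, and your proposal must stand on its own. Its overall strategy is the correct classical one --- Ando's original argument, as presented in \cite{Ando}, \cite{SzNF70} or \cite{Paulsen}: (i) dilate the commuting pair $T_1,T_2$ to a pair of commuting isometries, the crux being a unitary identification of defect data; (ii) extend the commuting isometries to commuting unitaries.

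As written, however, the sketch has concrete gaps. (a) The model space $K = H \oplus \cD_{T_1} \oplus \cD_{T_1} \oplus \cdots$ is too small: a second isometry dilating $T_2$ needs room for the defect of $T_2$ at every level, as your own unitary $W$ on $\cD_{T_1}\oplus\cD_{T_2}$ implicitly demands; Ando's construction takes each level to be $\cD_{T_1}\oplus\cD_{T_2}$ (Paulsen uses four copies of $H$ per level). (b) Your second column is wrong: it must be $D_{T_1}T_2$ over $D_{T_2}$, not $T_2 D_{T_1}$ over $D_{T_2}$, and the order matters because $D_{T_1}$ need not commute with $T_2$. The whole argument rests on the exact identity $\|D_{T_2}T_1 h\|^2 + \|D_{T_1}h\|^2 = \|h\|^2 - \|T_2T_1h\|^2 = \|D_{T_1}T_2 h\|^2 + \|D_{T_2}h\|^2$, where the last equality uses $T_1T_2=T_2T_1$; with your columns one would instead need $D_{T_1}T_2^*T_2D_{T_1} = T_2^*D_{T_1}^2T_2$, which already fails for $T_1=T_2$ a non-normal $2\times 2$ contraction, so the intertwining $W$ you posit does not exist. (c) Even with the correct columns, $W$ is at first only a unitary between the closures of the two ranges; completing it to a unitary of the whole level space requires the two orthocomplements to have equal dimension, which is automatic when $\dim H<\infty$ but false in general --- this is precisely why Ando pads the levels. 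You correctly flag this as ``the main obstacle'' but give no argument for it. (d) Your claim that the isometries-to-unitaries step is what makes $k=2$ special is backwards: by It\^o's theorem, \emph{any} commuting family of isometries, of any size, extends to a commuting family of unitaries; the entire two-variable phenomenon lives in step (i). None of these defects is fatal to the strategy --- they are all repaired in the classical sources --- but as it stands the proposal is a plan for Ando's proof rather than a proof.
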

It follows that von Neumann's Inequality holds for two commuting contractions. This is sometimes referred to, and rightly so, as \emph{Ando's Inequality}. It would be interesting to provide a proof of Ando's Inequality for pairs of commuting contractions on a finite dimensional space using $N$-dilations, as we have done above for the single operator case. 

\vspace{0.2cm}
\noindent{\bf Problem C:} \emph{Given $N \in \mb{N}$, is it true that every pair of commuting contractions on $H$ has a unitary $N$-dilation on a finite dimensional space?}
\vspace{0.2cm}

Any solution to the above problem would be interesting.  If the answer is \emph{yes}, then this would show that the mysterious difference between two commuting contractions (Ando's Theorem) and three commuting contractions (Parrott's counter example) is already present at the finite dimensional level. On the other hand, Ando's Theorem is rather different from Sz.-Nagy's Theorem: the minimal dilation of a couple of commuting contractions is not unique, and the proof of Ando's Theorem is not canonical. We have heard an operator theorist say that Ando's Theorem makes operator theorists feel ``uneasy". If the answer to the above problem is \emph{no}, it would show that Ando's Theorem is a truly infinite dimensional phenomenon.

A close relative of Ando's Theorem is the following (which is usually \emph{not} called the Commutant Lifting Theorem, but is related. See \cite[Chapter VII]{FoiasFrazho}).
\begin{theorem}{\bf ``Commutant Lifting Theorem".}
Let $A$ and $B$ be be commuting contractions on $H$, and let $U$ on $K$ be the minimal unitary dilation of $A$. Then there exists a contraction $V$ on $K$ that commutes with $U$, such that
\bes
B^k = P_H V^k P_H \,\, , \,\, k \in \mb{N}.
\ees
\end{theorem}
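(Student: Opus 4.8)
The plan is to route the argument through the minimal \emph{isometric} dilation of $A$, which already sits inside $U$, prove a contractive commuting lifting there, and then push it back up to $U$. It is tempting to shortcut via Ando's Theorem: produce commuting unitaries $\hat U,\hat V$ dilating $(A,B)$, restrict $\hat U$ to the reducing subspace $\cK:=\bigvee_{j\in\mb{Z}}\hat U^j H$ (on which $\hat U$ restricts to the minimal unitary dilation of $A$, hence may be identified with $U$ by uniqueness), and set $V:=P_{\cK}\hat V\big|_{\cK}$. This $V$ is a contraction commuting with $U$ with $P_H V P_H=B$, but the compression spoils higher powers: one computes $P_H V^2 P_H-B^2=-\,P_H\hat V P_{\cK^\perp}\hat V P_H$, which need not vanish. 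So an honest lifting argument seems unavoidable, and identifying exactly \emph{where} contractivity must be arranged is the point of the proof.

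First I would set $\cK_+:=\bigvee_{n\ge 0}U^n H$. This is $U$-invariant, $W:=U\big|_{\cK_+}$ is the minimal isometric dilation of $A$, the subspace $H$ is co-invariant for $W$ with $W^*\big|_H=A^*$, and in Schäffer coordinates $\cK_+=H\oplus\cM$ with $\cM:=\cK_+\ominus H$ a shift space, so that $W=\left(\begin{smallmatrix}A & 0\\ C & S\end{smallmatrix}\right)$ where $S$ is a unilateral shift and $Ch=(D_A h,0,0,\dots)$.

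The heart of the matter is to build a contraction $Y$ on $\cK_+$ with $YW=WY$ and $Y^*\big|_H=B^*$. These two requirements force the block form $Y=\left(\begin{smallmatrix}B & 0\\ Y_{21} & Y_{22}\end{smallmatrix}\right)$ with $Y_{22}$ commuting with $S$, and then $YW=WY$ collapses to the single intertwining relation
\[
Y_{21}A-S\,Y_{21}=CB-Y_{22}\,C .
\]
Producing $Y_{21}\colon H\to\cM$ and $Y_{22}$ (commuting with $S$) that solve this \emph{while keeping} $\|Y\|\le 1$ is precisely the content of the commutant lifting theorem, and this is where I expect the real difficulty to lie. The standard way to clear it is a one-step extension: fill in the entries of $Y$ along the grading of the shift one level at a time, at each stage choosing the new block by Parrott's lemma (equivalently, a Schur-complement positivity completion) so that the partially defined operator remains a contraction, and then check that these choices are compatible with the commutation relation so that they assemble into a single contraction $Y$ on all of $\cK_+$. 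I would treat this contractive completion as the main obstacle and, if pressed, quote the Sz.-Nagy--Foias construction for it.

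It remains to transport $Y$ to $K$. Since $W$ is an isometry whose minimal unitary extension is $U$, the union $\bigcup_{n\ge 0}U^{-n}\cK_+$ is dense in $K$ and the subspaces are nested, $\cK_+\subseteq U^{-1}\cK_+\subseteq\cdots$; hence the formula $V\big(U^{-n}x\big):=U^{-n}Yx$ (for $x\in\cK_+$, $n\ge 0$) is well defined, contractive, and defines a contraction $V$ on $K$ with $VU=UV$ and $V\big|_{\cK_+}=Y$. Finally, $\cK_+$ is $V$-invariant and $V$ agrees with $Y$ on it, so for $h\in H$ one has $V^k h=Y^k h\in\cK_+$; and since $H$ is co-invariant for $Y$ with $Y^*\big|_H=B^*$, we get $P_H Y^{*k}\big|_H=(B^*)^k$, whose adjoint gives $P_H V^k P_H=P_H Y^k\big|_H=B^k$, as required.
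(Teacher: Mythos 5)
You should first know that the paper does not prove this theorem at all: it states it with a citation to Foias--Frazho (Chapter VII), together with the remark that it and Ando's Theorem are easily derived from one another, so there is no ``paper proof'' to compare against. Judged on its own, the scaffolding of your argument is correct and contains real content: the computation showing why the Ando shortcut fails for powers $k\ge 2$ is right (and the shortcut's compression does commute with $U$, as you claim, since $\cK$ reduces $\hat U$); the operator $U\big|_{\cK_+}$ with $\cK_+=\bigvee_{n\ge 0}U^nH$ is indeed the minimal isometric dilation $W$ of $A$; the block-matrix reformulation of the lifting problem is correct; and the final extension step is sound and complete --- since the subspaces $U^{-n}\cK_+$ increase and have dense union in $K$, the formula $V\big(U^{-n}x\big):=U^{-n}Yx$ is well defined exactly because $YW=WY$, is contractive, commutes with $U$, and yields $P_HV^kP_H=B^k$ for all $k$ precisely because $Y$ is a genuine lifting, $Y^*\big|_H=B^*$. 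Your insistence on the lifting property, rather than a mere dilation, is what repairs the failure you diagnosed in the shortcut, and it is correctly placed.

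The genuine gap is the step you yourself flag as the heart: the existence of a contraction $Y$ on $\cK_+$ with $YW=WY$ and $Y^*\big|_H=B^*$ is precisely the (isometric) Commutant Lifting Theorem, and your proposal does not prove it. The one-step-extension plan names the right strategy, but the clause ``check that these choices are compatible with the commutation relation'' is where the entire theorem lives: Parrott's lemma hands you contractive completions, yet arranging, at every stage, a completion that simultaneously intertwines the shift is the whole content of the Sz.-Nagy--Foias/Sarason argument, and nothing in your write-up carries it out. So, read as a self-contained proof, the core is missing; what you actually have is a correct and complete reduction of the paper's unitary-dilation statement to the standard isometric commutant lifting theorem, plus a citation for the latter --- which, to be fair, is no less than the paper itself offers.
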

The theorem that is usually referred to as the Commutant Lifting Theorem makes the same assertion as in the above theorem, with two differences. The first difference is that $U$ is taken to be the minimal \emph{isometric} dilation of $A$, and not the minimal \emph{unitary} dilation. The second difference is that $V$ is then asserted to be a \emph{lifting} of $B$, and not merely a dilation. This means that $B^*$ is equal to the restriction of $V^*$ to $H$.

Ando's Theorem and the Commutant Lifting Theorem are easily derived one from the other (see \cite[Section VII.6]{FoiasFrazho}). In the context of $N$-dilations, all we could show is that ``commutant lifting" implies the existence of a joint $N$-dilation.

\begin{proposition}
Let $A$ and $B$ be two commuting contractions on $H$. Let $U$ be a unitary $N$-dilation of $A$ on a finite dimensional space $K$, and assume that there is a contraction $V$ on $K$ that commutes with $U$, such that the pair $U,V$ is a (non-unitary) $N$-dilation of $A,B$. Then the pair $A,B$ has a unitary $N$-dilation on a finite dimensional space.
\end{proposition}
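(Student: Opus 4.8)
The plan is to reduce the problem to the doubly commuting situation already handled in Theorem \ref{thm:doublycommuting}. The crucial first observation is that, although we only assume $U$ commutes with $V$, the unitarity of $U$ automatically forces $U$ to commute with $V^*$ as well: from $UV=VU$ one gets $VU^*=U^*V$ by multiplying on both sides by $U^*=U^{-1}$, and taking adjoints gives $UV^*=V^*U$. Hence $U$ commutes with $V^*V$ and with $VV^*$, and therefore with the positive operators $D_V=(I-V^*V)^{1/2}$ and $D_{V^*}=(I-VV^*)^{1/2}$, the square root being a norm-limit of polynomials in the respective operator. Thus the pair $U,V$ doubly commutes, which is exactly the hypothesis under which the ampliation trick of Theorem \ref{thm:doublycommuting} can be run to turn the mere contraction $V$ into a unitary while keeping $U$ unitary.

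Concretely, I would set $\tilde{K}=K\oplus\cdots\oplus K$ ($N+1$ copies) and define on it the (non-minimal) Egerv\'ary-type unitary $N$-dilation of $V$,
\[
\tilde{V}=\begin{pmatrix}
V   &   &        &   & D_{V^*} \\
D_V &   &        &   & -V^*    \\
    & I &        &   &         \\
    &   & \ddots &   &         \\
    &   &        & I & 0
\end{pmatrix},
\]
which has the same shape as in Theorem \ref{thm:Ndil} but uses full copies of $K$ in place of the minimal spaces $\cD_{V^*}$, together with the ampliation $\tilde{U}=U\oplus\cdots\oplus U$. Unitarity of $\tilde{V}$ is the mechanical verification of Theorem \ref{thm:Ndil} (using $V^*D_{V^*}=D_VV^*$), and $\tilde{U}$ is unitary by construction. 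The point of spreading over full copies of $K$ rather than over the minimal dilation is that every block entry of $\tilde{V}$ — namely $V$, $V^*$, $D_V$, $D_{V^*}$ and $I$ — commutes with $U$ by the first paragraph; since $\tilde{U}$ is block-diagonal with $U$ in each slot, this yields $\tilde{U}\tilde{V}=\tilde{V}\tilde{U}$ at once, and no intertwining unitary between $\cD_V$ and $\cD_{V^*}$ need be chosen.

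It then remains to check that the $N$-dilation property of $U,V$ for $A,B$ propagates to $\tilde{U},\tilde{V}$. For $n_1+n_2\le N$ I would compute the compression to $K$ in two moves: since $\tilde{U}$ is block-diagonal it commutes with the projection $P_K$ onto the first copy of $K$, so $P_K\tilde{U}^{n_1}\tilde{V}^{n_2}P_K=U^{n_1}\big(P_K\tilde{V}^{n_2}P_K\big)=U^{n_1}V^{n_2}$, the last equality being the $N$-dilation property of $\tilde{V}$ (valid for $n_2\le N$). Compressing once more to $H\subseteq K$ and invoking the hypothesis that $U,V$ is an $N$-dilation of $A,B$ gives $P_H\tilde{U}^{n_1}\tilde{V}^{n_2}P_H=A^{n_1}B^{n_2}$, as required, and $\tilde{K}$ is finite dimensional. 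The only real content lies in the first paragraph: recognizing that the unitarity of $U$ promotes one-sided commutation with $V$ to double commutation is precisely what places us inside the scope of Theorem \ref{thm:doublycommuting}; once that is seen, both the construction and the bookkeeping of the compressions are routine.
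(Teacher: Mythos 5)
Your proof is correct, and structurally it is the same as the paper's: promote the commutation of $U$ and $V$ to double commutation, then apply the technique of Theorem \ref{thm:doublycommuting} to the pair $U,V$ and compose dilations. The difference lies in how double commutation is obtained. The paper invokes Fuglede's Theorem ($U$ is normal and commutes with $V$, hence with $V^*$), whereas you prove it by the elementary manipulation $UV=VU \Rightarrow VU^*=U^*V \Rightarrow UV^*=V^*U$, which is valid because $U$ is unitary and not merely normal. This is a genuine simplification in the spirit of the paper's elementary finite-dimensional program: it removes the only non-linear-algebraic ingredient from the proof, at the cost of an argument that would not survive a generalization from unitary to normal $U$, where Fuglede's Theorem would still be needed. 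The rest of your write-up --- the one-step construction of $\tilde V$ on $N+1$ full copies of $K$ together with the ampliation $\tilde U$ --- is exactly what the paper means by ``the technique used in the proof of Theorem \ref{thm:doublycommuting}'' in this special case where one of the two contractions is already unitary; spelling it out has the added benefit of showing why full copies of $K$ (rather than minimal defect spaces) make every block entry of $\tilde V$ commute with $U$, so that no intertwining unitary is needed, and of making explicit that the resulting dilation space has dimension $(N+1)\dim K$.
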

\begin{proof}
Since $U$ is normal and $V$ commutes with $U$, Fuglede's Theorem implies that $U$ and $V$ doubly commute. Thus, the technique used in the proof of Theorem \ref{thm:doublycommuting} provides us with a unitary $N$-dilation for $U,V$, which is the required unitary $N$-dilation for $A,B$.
\end{proof}
This leads us to the following problem:

\vspace{0.2cm}
\noindent{\bf Problem D:} \emph{Is there a  ``Commutant Lifting Theorem" in the setting of unitary $N$-dilations?}

\section{Dilations of completely positive maps}

There is a dilation theory of contractive completely positive maps that is reminiscent of the dilation theory of contractions. In this section we find that this theory is truly infinite dimensional in nature. 

A linear map $\phi : M_k(\mb{C}) \rightarrow M_k(\mb{C})$ is said to be \emph{positive} if $\phi(A) \geq 0$ whenever $A \geq 0$. A linear map $\phi : M_k(\mb{C}) \rightarrow M_k(\mb{C})$ is said to be \emph{completely positive} if, for every $m \in \mb{N}$, the linear map 
\bes
\phi \otimes I_m : M_k(\mb{C}) \otimes M_m(\mb{C}) = M_{km}(\mb{C}) \longrightarrow M_k(\mb{C}) \otimes M_m(\mb{C}) = M_{km}(\mb{C})
\ees
defined by $\phi \otimes I_m (A \otimes B) = \phi(A) \otimes B$, is positive. We will use the acronym CP for ``completely positive".

Let $H$ be a finite dimensional complex Hilbert space of dimension $n$. We identify the algebra $B(H)$ of linear operators on $H$ with $M_n(\mb{C})$ in the usual way. Recall that by Choi's Theorem  \cite[Theorem 1]{Choi}, the CP maps on $B(H) = M_n(\mb{C})$ are precisely those that can be represented in the form
\bes
\phi(T) = \sum_{i=1}^d A_i T A_i^*,
\ees
where $d \in \mb{N}$ and $A_1, \ldots, A_d \in B(H)$. The minimal $d$ for which such a representation is possible is called the \emph{index} of $\phi$. 

\begin{definition}
Let $\phi$ be a CP map acting on $B(H)$. A \emph{$*$-endomorphic dilation} for $\phi$  consists of a Hilbert space $K \supseteq H$ and a $*$-endomorphism $\alpha$ of $B(K)$ such that
\be\label{eq:edil}
\phi^k (T) = P_H \alpha^k (T) P_H
\ee
for all $k \in \mb{N}$ and all $T \in B(H)$.
\end{definition}
Note that in the above definition we identified $B(H)$ with the subalgebra of $B(K)$ given by  $P_H B(K)P_H \subseteq B(K)$. 
The following theorem is the analogue of Sz.-Nagy's Dilation Theorem appropriate for CP maps, and it is a key theorem in quantum dynamics.
\begin{theorem}\label{thm:Bhat}{\bf Bhat's Dilation Theorem\footnote{Bhat and the other researchers cited also prove this theorem for (the much harder case of) continuous one-parameter semigroups of CP maps.}} \emph{(see \cite{Bhat96,BS00,MS02,SeLegue}).} 
Every contractive CP map has a $*$-endomorphic dilation.
\end{theorem}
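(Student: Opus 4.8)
The plan is to recognize a $*$-endomorphic dilation of $\phi$ as a multivariable version of the isometric dilation behind Theorem~\ref{thm:Ndil}. First I would use Choi's Theorem to write $\phi(T) = \sum_{i=1}^d A_i T A_i^*$ with $A_1,\dots,A_d \in B(H)$. Contractivity of $\phi$ says $\phi(I) = \sum_i A_i A_i^* \le I$, which is exactly the statement that the row $[A_1 \cdots A_d] \colon H^{\oplus d} \to H$ is a contraction. Iterating and expanding, one gets
\bes
\phi^k(T) = \sum_{|w| = k} A_w\, T\, A_w^*,
\ees
where $w = i_1 \cdots i_k$ ranges over words of length $k$ in $\{1,\dots,d\}$ and $A_w = A_{i_1}\cdots A_{i_k}$. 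Thus the whole semigroup $\{\phi^k\}_{k \ge 0}$ is governed by the word products $A_w$.

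Second, I would use the fact that any isometries $S_1,\dots,S_d$ on a Hilbert space $K$ with pairwise orthogonal ranges, i.e. $S_i^* S_j = \delta_{ij} I$, define a normal $*$-endomorphism of $B(K)$ by $\alpha(X) = \sum_i S_i X S_i^*$ (multiplicativity is immediate from $S_i^* S_j = \delta_{ij} I$, and every normal $*$-endomorphism of $B(K)$ arises this way). Orthogonality of the ranges makes the iterates collapse neatly to $\alpha^k(X) = \sum_{|w|=k} S_w X S_w^*$, with $S_w = S_{i_1} \cdots S_{i_k}$. Identifying $T \in B(H)$ with $T \oplus 0 \in B(K)$ and comparing with the first paragraph, the dilation identity (\ref{eq:edil}) will hold for \emph{every} $k$ at once, provided the $S_i$ are chosen so that $P_H S_w P_H = A_w$ for all words $w$.

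Third, and this is the crux, I would construct a row-isometric dilation of the row contraction $(A_1,\dots,A_d)$: a space $K \supseteq H$ and isometries $S_1,\dots,S_d$ with orthogonal ranges for which $H$ is coinvariant, $S_i^*|_H = A_i^*$. A direct computation then gives $P_H S_w^* P_H = A_w^*$ for every word, which is precisely the condition needed above. This is the multivariable analogue of the Sz.-Nagy/Egerv\'ary construction of Theorem~\ref{thm:Ndil}: for $d=1$ it is nothing but the isometric dilation of a single contraction, and in the spirit of the Remark following that theorem one assembles it as an inductive limit. Concretely one takes a Fock-type space $K = H \oplus (\cD \otimes \cF)$, with $\cD$ the defect space of the row contraction and $\cF$ the full Fock space over $\mb{C}^d$, and lets the $S_i$ act as creation-type isometries that reproduce the $A_i$ upon compression to the top level $H$; minimality, $K = \overline{\mathrm{span}}\{S_w h : h \in H, w \text{ a word}\}$, can be imposed.

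Finally, putting $\alpha(X) = \sum_i S_i X S_i^*$ yields the $*$-endomorphic dilation, and the verification of (\ref{eq:edil}) is exactly the bookkeeping of the second paragraph (note that no unitality of $\alpha$ is needed, since we only ever compress $\alpha^k$). I expect the main obstacle to be the construction and verification of the row-isometric dilation: one must check simultaneously that the $S_i$ are isometries with mutually orthogonal ranges \emph{and} that $H$ is coinvariant, so that all powers compress correctly. The orthogonality of the ranges is precisely what lets a \emph{single} endomorphism $\alpha$ encode \emph{all} the iterates $\phi^k$ at once; it is the genuinely multivariable ingredient with no scalar analogue. Consistent with the theme of this section, $K$ is forced to be infinite dimensional whenever the row $(A_i)$ is not a co-isometry, since then the Fock space $\cF$ is infinite dimensional.
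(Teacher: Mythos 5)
Your proposal is correct, but there is nothing in the paper to compare it against: the paper does not prove Bhat's Theorem, it imports it from the literature \cite{Bhat96,BS00,MS02,SeLegue}, remarking only that the methods of \cite{MS02} and \cite{Shreprep} allow one to reduce it to the situation of Sz.-Nagy's Theorem (see \cite[Corollary 6.3]{ShalitSolel}). Your argument is exactly such a reduction, and it is essentially the standard proof in the discrete-time, $B(H)$ setting: Choi--Kraus decomposition $\phi(T)=\sum_i A_i T A_i^*$, the observation that contractivity makes $(A_1,\ldots,A_d)$ a row contraction, the Frazho--Bunce--Popescu row-isometric dilation $S_1,\ldots,S_d$ with $S_i^*|_H = A_i^*$ and $S_i^* S_j = \delta_{ij}I$, and finally $\alpha(X)=\sum_i S_i X S_i^*$. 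The steps where this could have gone wrong are handled correctly: multiplicativity of $\alpha$ really does follow from orthogonality of the ranges, and you demand coinvariance of $H$ (i.e. $S_i^*|_H = A_i^*$) rather than the weaker condition $P_H S_i P_H = A_i$, which is what makes $P_H S_w P_H = A_w$ hold for all \emph{words} $w$ and hence makes (\ref{eq:edil}) hold for all $k$ simultaneously. The one ingredient you treat as a black box, the existence of the row-isometric dilation, is a genuine theorem (Frazho, Bunce, Popescu), and your Fock-space sketch is the correct construction; a fully self-contained write-up would verify that the creation-type operators are isometries with orthogonal ranges and that $H$ is coinvariant --- a computation of the same nature as checking $U^*U=I$ in Theorem \ref{thm:Ndil}. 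Compared with the cited sources, your route buys concreteness and elementarity in the type I, discrete-time case (which is all the stated theorem requires), whereas Bhat's inductive-limit construction and the Bhat--Skeide and Muhly--Solel Hilbert-module approaches buy generality: arbitrary von Neumann algebras and continuous one-parameter semigroups, the case flagged as much harder in the footnote. One caveat: Choi's Theorem as cited in the paper is a statement about matrices, so your proof as written covers the finite-dimensional $H$ standing in that section; for infinite-dimensional $H$ one needs the Kraus decomposition of a \emph{normal} CP map, possibly with infinitely many terms, which Popescu's dilation theorem accommodates.
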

The relation between Bhat's and Sz.-Nagy's  Dilation Theorems is deep (see Section 5 in \cite{ShalitSolel}). In fact, the methods of \cite{MS02} and \cite{Shreprep} enable to prove Bhat's Theorem by reducing it to the situation in Sz.-Nagy's Theorem (see \cite[Corollary 6.3]{ShalitSolel}), and also \emph{vice versa}. With this in mind, what follows may seem a little mysterious.

As in the case of the unitary dilation, when $\phi$ is a CP map acting on $B(H)$ which is not a $*$-endomorphism, then the dilation $\alpha$ necessarily acts on $B(K)$ where $K$ is infinite dimensional. Note that the well known Stinespring dilation of $\phi$ involves only finite dimensional spaces when $\dim H < \infty$, but it does not qualify as a $*$-endomorphic dilation in the above sense. One might hope that, as in the case of unitary dilations, one can find dilations acting on finite dimensional spaces for which (\ref{eq:edil}) holds only for finitely many $k$. However, this is impossible. The following proposition should be considered as evidence that dilation theory of CP maps is far more delicate than the dilation theory of operators (see also Section 5.3 in \cite{ShalitSolel}). 

\begin{proposition}
Let $\phi$ be a CP map on $B(H)$ of index $d>1$. Suppose that $K \supseteq H$ and that $\alpha$ is a $*$-endomorphism on $B(K)$ such that (\ref{eq:edil}) holds for $k=1$. Then $K$ is infinite dimensional.
\end{proposition}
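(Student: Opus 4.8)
\emph{Plan.} I would prove the contrapositive: assuming $\dim K < \infty$, I will show that $\phi$ must have index at most $1$, contradicting the hypothesis $d > 1$. The whole argument rests on the rigidity of $*$-endomorphisms of a full matrix algebra in finite dimensions, which forces the compression in (\ref{eq:edil}) to collapse to a single Kraus operator.

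\emph{Step 1 (structure of $\alpha$; the crux).} Write $N = \dim K$, so $B(K) \cong M_N(\mb{C})$. Since $\phi$ has index $d>1$ it is in particular nonzero, so by (\ref{eq:edil}) with $k=1$ the map $\alpha$ is nonzero as well. Because $M_N(\mb{C})$ is simple, the $*$-homomorphism $\alpha$ is injective. Now $\alpha(I_K)$ is a projection and is the unit of the $*$-subalgebra $\alpha(B(K)) \cong M_N(\mb{C})$, which therefore embeds \emph{unitally} into $\alpha(I_K)B(K)\alpha(I_K) \cong M_r(\mb{C})$, where $r = \operatorname{rank}\alpha(I_K) \le N$. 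A unital $*$-representation of $M_N(\mb{C})$ on $\mb{C}^r$ is a direct sum of copies of the standard representation, so $r$ is a positive multiple of $N$; combined with $r \le N$ this forces $r = N$, hence $\alpha(I_K) = I_K$ and $\alpha$ is a unital $*$-automorphism of $M_N(\mb{C})$. Every such automorphism is inner, so there is a unitary $U \in B(K)$ with $\alpha(X) = UXU^*$. (Equivalently, one may use the Cuntz-family form $\alpha(X) = \sum_i V_i X V_i^*$ with $V_i$ isometries having orthogonal ranges: in finite dimensions each isometry $V_i$ is unitary, and orthogonality of ranges then permits only one summand.)

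\emph{Step 2 (collapse of the compression).} Let $J : H \hookrightarrow K$ be the inclusion isometry, so that under the identification $B(H) \cong P_H B(K) P_H$ a matrix $T \in B(H)$ corresponds to its zero-padded extension $\tT = J T J^* \in B(K)$, and (\ref{eq:edil}) with $k=1$ reads $\phi(T) = J^* \alpha(\tT) J$. Substituting $\alpha(X) = UXU^*$ gives
\bes
\phi(T) = J^* U \tT U^* J = (J^*UJ)\, T\, (J^*UJ)^* = A T A^*, \qquad A := J^* U J = P_H U\big|_H .
\ees

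\emph{Conclusion and main obstacle.} The identity $\phi(T) = ATA^*$ is a Choi representation of $\phi$ with a single Kraus operator, so $\phi$ has index at most $1$ (and exactly $1$, since $\phi \ne 0$ forces $A \ne 0$). This contradicts $d>1$, and hence $K$ must be infinite dimensional. The only genuinely nontrivial point is Step 1: it is precisely the non-unital and higher-multiplicity behavior of $*$-endomorphisms — available only when $\dim K = \infty$ — that makes room for the infinitely many Kraus operators of a higher-index dilation. Once the finite-dimensional structure theorem is in hand, Step 2 and the conclusion are routine, the only care needed being the bookkeeping of the identification $B(H) \cong P_H B(K) P_H$.
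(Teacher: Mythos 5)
Your proof is correct and takes essentially the same route as the paper's: in finite dimensions $\alpha$ is forced to be an inner $*$-automorphism $\alpha(X)=UXU^*$, and compressing yields $\phi(T)=(P_HUP_H)T(P_HUP_H)^*$, a single-Kraus-operator representation contradicting index $d>1$. The only difference is that you spell out (via the multiplicity/Cuntz-family argument) why $\alpha$ must be a unital automorphism, a step the paper simply asserts.
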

\begin{proof}
If $K$ is finite dimensional then $\alpha$ is a $*$-automorphism. Consequently, there is a unitary $U$ on $K$ such that 
\bes
\alpha(T) = UTU^* \quad , \quad T \in B(H).
\ees
From (\ref{eq:edil}) holding for $k=1$ we have 
\bes
\phi(T) = (P_HU P_H) T (P_H U P_H)^* \quad , \quad T \in B(H),
\ees
which shows that the index of $\phi$ is $1$.
\end{proof}
\begin{remark}\emph{
By \cite[Theorem 5]{Choi}, if a CP map $\phi$ has index $1$ then it is an extreme point of the convex set of CP maps $\psi$ on $B(H)$ such that $\psi(I) = \phi(I)$. Thus, ``most" CP maps have index bigger than $1$.}
\end{remark}

\subsection*{Acknowledgments} We would like to thank Ken Davidson, Adam Fuller, Rajesh Pereira and Franciszek Szafraniec for pointing our attention to some important references. The second author would like to thank Ken Davidson for the warm and generous hospitality provided at the Department of Pure Mathematics at the University of Waterloo.

\subsection*{Note added after acceptance} After this paper was completed problems {\bf B} and {\bf C} were solved in the affirmative; see J. E. M\raise.45ex\hbox{c}Carthy and O. M. Shalit, {\em Unitary N-dilations for tuples of commuting matrices}, preprint, arXiv:1105.2020v1.


\end{document}